\documentclass{amsart}
\usepackage{hyperref}
\usepackage{amsfonts,amscd,amssymb,amsmath,amsthm}
\usepackage{graphicx,caption,subcaption,mathrsfs,appendix}
\usepackage{adjustbox}
\usepackage{nicefrac}
\usepackage{xparse}
\usepackage{bbm}

\usepackage[foot]{amsaddr}

\usepackage{tikz,pgfplots,pgfplotstable}
\usetikzlibrary{arrows,matrix,positioning,fit}
\usetikzlibrary{pgfplots.groupplots}
\usepgfplotslibrary{external}
\tikzexternalize

\begin{document}

\newtheorem{theorem}{Theorem}[section]
\newtheorem{lemma}[theorem]{Lemma}
\newtheorem{corollary}[theorem]{Corollary}
\newtheorem{conjecture}[theorem]{Conjecture}
\newtheorem{cor}[theorem]{Corollary}
\newtheorem{proposition}[theorem]{Proposition}
\theoremstyle{definition}
\newtheorem{definition}[theorem]{Definition}
\newtheorem{assumption}[theorem]{Assumption}
\newtheorem{example}[theorem]{Example}
\newtheorem{claim}[theorem]{Claim}
\newtheorem{remark}[theorem]{Remark}

\newenvironment{pfofthm}[1]
{\par\vskip2\parsep\noindent{\sc Proof of\ #1. }}{{\hfill
$\Box$}
\par\vskip2\parsep}
\newenvironment{pfoflem}[1]
{\par\vskip2\parsep\noindent{\sc Proof of Lemma\ #1. }}{{\hfill
$\Box$}
\par\vskip2\parsep}


\newcommand{\R}{\mathbb{R}}
\newcommand{\T}{\mathcal{T}}
\newcommand{\C}{\mathbb{C}}
\newcommand{\D}{\mathbb{D}}
\newcommand{\G}{\mathcal{G}}
\newcommand{\Z}{\mathbb{Z}}
\newcommand{\Q}{\mathbb{Q}}
\newcommand{\E}{\mathbb E}
\newcommand{\N}{\mathbb N}

\newcommand{\barray}{\begin{eqnarray*}}
\newcommand{\earray}{\end{eqnarray*}}

\newcommand{\Def}{:=}


\DeclareDocumentCommand \Pr { o }
{%
\IfNoValueTF {#1}
{\operatorname{Pr}  }
{\operatorname{Pr}\left[ {#1} \right] }%
}
\newcommand{\Prob}{\Pr}
\newcommand{\Exp}{\mathbb{E}}
\newcommand{\expect}{\mathbb{E}}
\newcommand{\1}{\one}
\newcommand{\Pto}{\overset{\mathbb{P}}{\to} }
\newcommand{\weakto}{\Rightarrow}
\newcommand{\prob}{\Pr}
\newcommand{\pr}{\Pr}
\newcommand{\filt}{\mathscr{F}}
\newcommand{\ohadI}{\mathbbm{1}}
\DeclareDocumentCommand \one { o }
{%
\IfNoValueTF {#1}
{\ohadI }
{\ohadI\left\{ {#1} \right\} }%
}
\newcommand{\sgn}{\operatorname{sgn}}
\newcommand{\Bernoulli}{\operatorname{Bernoulli}}
\newcommand{\Binomial}{\operatorname{Binom}}
\newcommand{\Binom}{\Binomial}
\newcommand{\Poisson}{\operatorname{Poisson}}
\newcommand{\Exponential}{\operatorname{Exp}}

\newcommand{\Var}{\operatorname{Var}}
\newcommand{\Cov}{\operatorname{Cov}}


\newcommand{\Id}{\operatorname{Id}}
\newcommand{\diag}{\operatorname{diag}}
\newcommand{\tr}{\operatorname{tr}}
\newcommand{\proj}{\operatorname{proj}}


\DeclareDocumentCommand \JB { O{n} O{\lambda} } {J_{{#1}}({#2})}

\DeclareDocumentCommand \LP { O{\ESD} } {U_{ {#1} }}
\newcommand{\LPL}{ \LP[{\mu_N}] }

\newcommand{\ESD}{ L_N }

\newcommand{\model}{\mathcal{M}}

\newcommand{\PI}{\Pi}
\DeclareDocumentCommand \PG { O{n} }
{
\mathfrak{S}_{{ #1 }}
}
\newcommand{\RS}{\mathcal{C}}

\newcommand{\TODO}[1]{ {\bf TODO: #1} }
\newcommand{\row}{X}
\newcommand{\col}{Y}
\newcommand{\srow}{x}
\newcommand{\scol}{y}
\newcommand{\csrow}{w}
\newcommand{\cscol}{z}

\newcommand{\COMP}[1]{ \check{#1} }

\newcommand{\AllGOODz}{\mathcal{V}}
\newcommand{\GOODz}{\mathcal{V}_N}
\newcommand{\BADeps}{\epsilon'}

\newcommand{\UBM}{\mathbf{U}}
\newcommand{\UBMd}{ \mathbf{M}}
\newcommand{\UBMn}{ \mathbf{G}}
\newcommand{\Mcol}{ \mathcal{S}}
\newcommand{\UBMcol}{ \mathcal{S}}
\newcommand{\UBTcol}{ \mathcal{T}}
\newcommand{\UBw}{ w_{\mathbf{k}} }
\newcommand{\UBk}{\mathbf{k}}
\newcommand{\half}{\nicefrac{1}{2}}

\newcommand{\UBm}{\mathbf{m}}
\newcommand{\UBconf}{\mathcal{T}}
\DeclareDocumentCommand \NS { O{\UBm} O{\UBk} }{\mathfrak{T}_{{ #1 },{#2}}}

\newcommand{\LBM}{\mathbf{L}}
\newcommand{\LBMd}{ \mathbf{M}}
\newcommand{\LBMn}{ \mathbf{G}}
\newcommand{\LBMcol}{ \mathcal{S}}
\newcommand{\LBTcol}{ \mathcal{T}}
\newcommand{\LBw}{ w_{\mathbf{k}} }
\newcommand{\LBk}{\mathbf{k}}

\title{Regularization of non-normal matrices by Gaussian noise}
\author{Ohad Feldheim}
\address{Department of Mathematics, Weizmann Institute of Science}
\email{ohad\_f@netvision.net.il}
\author{Elliot Paquette}
\address{Department of Mathematics, Weizmann Institute of Science}
\email{paquette@weizmann.ac.il}
\author{Ofer Zeitouni}
\address{Department of Mathematics, Weizmann Institute of Science}
\email{ofer.zeitouni@weizmann.ac.il}
\thanks{Partially supported by an ISF grant and by the Herman P. 
	Taubman chair of Mathematics at the Weizmann Institute.
EP gratefully acknowledges the 
support of NSF Postdoctoral Fellowship DMS-1304057.
}
\date{April 13, 2014}

\begin{abstract}
	We consider the regularization of 
	matrices $M^N$ written in Jordan form
	by additive
	Gaussian noise $N^{-\gamma}G^N$, where $G^N$ is a matrix of i.i.d.
standard Gaussians and $\gamma>\half$ so that the operator norm of the additive
noise tends to $0$ with $N$. Under mild conditions on the structure of $M^N$
we evaluate the limit of the empirical measure of eigenvalues of
$M^N+N^{-\gamma} G^N$ 
and show that it depends on $\gamma$, in contrast with the case of a
single Jordan block.
\end{abstract}

\maketitle

\section{Introduction}
Write $G^N$ for an $N \times N$ random matrix whose entries are i.i.d. Gaussian variables, and
let $\{M^N\}_{N=1}^{\infty}$ be a sequence of deterministic $N \times N$ matrices.
Consider a noisy counterpart 
given by 
\[
  \model^N=M^N+N^{-\gamma}G^N,
\]
where $\gamma\in (0,\infty)$ is fixed.

It is natural to ask how the spectra of $\model^N$ differs from the spectra of $M^N.$
To meaure this, define the \emph{empirical spectral measure} of $\model^N$ as
\[
	L_N^{\model} \Def \frac{1}{N} \sum_{i=1}^N \delta_{\lambda_i}(\model^N)
\]
where $\lambda_i(M^N), i =1,\ldots, N$ are the eigenvalues of $\model^N$, and $\delta_x$ is the Dirac mass at $x$.  

If $\{M^N\}$ are normal, then when $\gamma > \half,$ 
$L_N^{\model} - L_N^{M}$ converges weakly in probability to $0$ as $N \to \infty.$
This follows immediately from Theorem 1.1 of~\cite{Sun96}
and the observation that the operator norm of
$N^{-\gamma}G^N$ goes to $0$ in probability
if and only if $\gamma > \half.$ 
This is one precise sense in which normal matrices are stable under perturbation.  See also~\cite{BhatiaBook} for more background on the stability of the spectra of normal matrices.


The case of non-normal matrices is more complicated.
To illustrate a particular well-known example,
consider the $N \times N$ nilpotent matrix
\[
	T^N =
	\begin{bmatrix}
		0  & 1 &   &   &    \\
		   & 0 & 1 &   &    \\
		   &   &\ddots & \ddots  &   \\
		   &   &   & 0 & 1  \\
		   &   &   &   & 0  \\
	\end{bmatrix}.
\]
This is a matrix whose eigenvalues are highly sensitive to perturbation.  Indeed, adding $\epsilon$ to the lower left entry of the matrix makes the eigenvalues of the perturbed matrix distribute as the $N$-th roots of $(-1)^{N+1}\epsilon.$  Thus, for $N$ large, any polynomially small (in $N$) perturbation $\epsilon$ will cause all the eigenvalues to move nearly unit distance.  For a general discussion of spectral instability of non-normal matrices and links to the notion of pseudospectra, see the comprehensive treatise~\cite{Trefethen}.

When Gaussian noise is added to $T^N,$ it is a consequence of \cite{Guionnet14} that $T^N + N^{-\gamma} G^N$ for $\gamma > \half$ has empirical spectral measure converging weakly to the uniform distribution on the unit circle.  
One way of explaining why this limiting distribution appears is through the notion of $\ast$-moment convergence.

Recall that a sequence of matrices $M^N$ converges in $\ast$-moments to an element $a$ in a $W^*$ probability space $(\mathcal{A},\|\cdot\|,*,\phi)$ if for any non-commutative polynomial $P$
\[
  \frac{1}{N} \tr P(M^N, \operatorname{Adj}({M^N})) \to \phi( P(a,a^*))
\]
as $N \to \infty$ (see~\cite[Chapter 5]{AnGuZe} for the necessary background on $W^*$ probability spaces).
In the case of the nilpotent matrices $T^N,$ these converge in $\ast$-moments to the Haar unitary element of $\mathcal{A},$ i.e. they have the same $\ast$-moment limit as a sequence of $N \times N$ Haar distributed unitary matrices.   The spectral measure of this limiting operator is the uniform measure on the unit circle.  Note that this example shows that convergence of $\ast$-moments 
does not imply the convergence of 
the corresponding 
empirical eigenvalue measure, which in this case is just $L_N^T = \delta_0.$

The results of \cite{Guionnet14},
which quantify some of the statements in \cite{Sniady02},
show that under appropriate assumptions on $a$ and $M^N$,
the 
empirical measure
$L_N^\model$ does converge to the spectral measure of the limiting
operator $a.$ (See~\cite{Guionnet14} for precise statements.)  Thus, in a
sense, the spectra of the limiting operator $a$ accurately represents
the spectra of its finite dimensional counterparts.  Note that in the setup
considered in \cite{Guionnet14}, the limit is independent of $\gamma,$ provided that $\gamma > \half.$

Here, we are concerned with the situation in which the limiting picture fails to accurately represent the spectra of the finite-dimensional random matrices.  For example, consider the $N \times N$ matrix
\begin{equation}\label{eq:tbn}
	A_{b}^N = \begin{bmatrix}
		T^b &   &    &   &  \\
		    &T^b&    &   &  \\
		    &   &\ddots& &  \\
		    &   &    &T^b&  \\
		    &   &    &   & T^c\\
	\end{bmatrix},
\end{equation}
where the $c \leq b.$
The matrix $A_{\log N}^N$ still converges in $\ast$-moments to the Haar unitary element $a,$ but it is shown in Proposition 7 of~\cite{Guionnet14} that
the limsup of the spectral radius $A_{\log N}^N+N^{-\gamma} G^N$ is strictly
smaller than $1$, if $\gamma>\gamma_0$ for some fixed $\gamma_0$.

In this paper, we show that a natural class of matrices 
generalizing
$A_{\log N}^N$, when perturbed by Gaussian noise $N^{-\gamma}G^N$, have
empirical measures of eigenvalues 
converging to $\gamma$-dependent limits.

\subsection*{Definitions and main results}

For each $N$, let $\{B^i(N)\}_{i=1}^{\ell(N)}$ be a family of Jordan blocks, 
with $B^i=B^i(N)$ having dimension $a_i\log N$ where $a_i=a_i(N)$ and 
$\sum_{i=1}^{\ell(N)} a_i\log N=N$.
We denote by $c_i=c_i(N)$ the eigenvalue of $B^i$.

Introduce the matrix
\[
M^N = \begin{bmatrix}
B^1 & & & \\
& B^2 & & \\
& & \ddots & \\
& & & B^{\ell(N)} \\
\end{bmatrix}.
\]
Fix $\gamma > \half$ and consider the matrix $\model = M^N + N^{-\gamma}
G^N,$ where $G^N$ is a matrix of i.i.d. standard normal variables.  Our
main result gives the convergence of the empirical distribution of
eigenvalues of $\model.$

To describe the limit, let 
$r_i = r_i(N)= \exp(\nicefrac{(-\gamma+\half)}{a_i}).$ Let
$m_{c,r}$ be the uniform probability measure on the circle centered at $c$ with radius
$r.$  Let $\ESD$ denote the empirical spectral measure of $M^N.$ Set $\mu_N$
to be the measure on $\C$ given by
\[
	\mu_N \Def \sum_{i=1}^{\ell(N)} \frac{a_i\log N}{N}m_{c_i,r_i}.
\]

If $\gamma > 1,$  we show that $\mu_N$ converges to $\mu$ provided $\ell = o(N).$

\begin{theorem}
\label{thm:main0} Suppose that $\gamma > 1$ and $\ell(N)=o(N).$
Suppose further that there is a compact $K \subseteq \C$ so that all
$\mu_N$ are supported on $K,$ and suppose that there is a probability measure
$\mu$ so that $\mu_N \weakto \mu.$
Then, the empirical measure $\ESD$ of $M^N + N^{-\gamma} G^N$
converges to $\mu$ weakly in probability.
\end{theorem}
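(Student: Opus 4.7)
The plan is to apply Girko's Hermitization principle: to prove $\ESD \weakto \mu$ in probability it suffices to show, for Lebesgue-a.e.\ $z \in \C$, the in-probability convergence
\[
  \frac{1}{N} \log|\det(\model^N - zI)| \to U_\mu(z) := \int \log|w - z|\, d\mu(w),
\]
together with uniform integrability of $\log x$ against the empirical distribution of squared singular values of $\model^N - zI$ near $0$. Since $U_{m_{c,r}}(z) = \max(\log|z-c|, \log r)$, the hypothesis $\mu_N \weakto \mu$ on a common compact support identifies the limit explicitly as $\lim_N \sum_i (a_i \log N/N)\max(\log|z-c_i|,\log r_i)$.

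For $z$ not an eigenvalue of $M^N$ I would factor
\[
  \det(\model^N - zI) = \det(M^N - zI)\,\det\bigl(I_N + N^{-\gamma} R_z G^N\bigr),\qquad R_z := (M^N - zI)^{-1}.
\]
The first factor is explicit: $\log|\det(M^N - zI)| = \sum_i (a_i \log N)\log|c_i - z|$. The resolvent $R_z$ is block-diagonal with block norms of order $|c_i-z|^{-a_i\log N}$. Using the key scaling identity $r_i^{a_i \log N} = N^{-\gamma+\half}$, two regimes emerge. When $z$ lies strictly outside every $D(c_i,r_i)$, each block satisfies $|c_i-z|^{-a_i \log N} \le N^{\gamma-\half-c}$ for some $c = c(z)>0$, so combined with $\|G^N\|_{\text{op}} = O(\sqrt N)$, a Neumann series yields $\log|\det(I_N + N^{-\gamma} R_z G^N)| = o(N)$. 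When $z$ lies strictly inside some of the disks, I would use a Schur-complement reduction to isolate the bad blocks, with the hypothesis $\gamma > 1$ ensuring that the quadratic corrections introduced by the Schur step remain controllable in operator norm and do not dominate the bare diagonal-block noise.

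This reduces the main claim to a single-block statement: for a Jordan block $B$ of size $n = a \log N$ with eigenvalue $c$ and $z$ with $|z-c| < r := \exp((-\gamma+\half)/a)$,
\[
  \frac{1}{n} \log|\det(B - zI + N^{-\gamma} H)| \to \log r
\]
in probability, where $H$ is the effective Gaussian perturbation produced by the Schur reduction. One proves this by analyzing the characteristic polynomial of the Jordan block perturbed by small Gaussian noise, or equivalently by studying the singular values of $B - zI + N^{-\gamma} H$. The effective noise scale on the determinant is $N^{-\gamma+\half}$, matching $r^n$, the extra factor $\sqrt N$ being inherited from the Frobenius norm of the strip of columns of $G^N$ to which the block is coupled. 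Uniform integrability of $\log \sigma_N$ near zero follows from standard smallest-singular-value bounds for Gaussian-perturbed matrices.

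The main obstacle is the single-block lemma together with the precise accounting of inter-block couplings in the Schur step. The hypothesis $\gamma > 1$ enters crucially to make the quadratic Schur correction subdominant in operator norm, permitting the clean block-by-block decoupling on which the plan depends; the complementary regime $\half < \gamma \le 1$ would require a more delicate treatment. The assumption $\ell(N)=o(N)$ is used when summing per-block error contributions to ensure they remain $o(N)$ in aggregate.
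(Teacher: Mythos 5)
Your overall framework (Girko Hermitization, log potential, a Schur-complement reduction to separate the blocks where $z$ sits inside the circle from those where it does not) is the same as the paper's, and the Neumann-series argument for $z$ well outside every disk is sound. The decomposition you start from is, however, genuinely different: the paper never factors out $\det(M^N-zI)$; it expands $\det(\model-zI)$ by cofactors against the Gaussian part (for the upper bound) and applies a Schur complement to a carefully \emph{permuted} matrix (for the lower bound).

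The serious gap is in your single-block lemma. You claim that after the Schur step the effective perturbation of a bad Jordan block $B$ of size $n=a\log N$ is Gaussian of scale $N^{-\gamma+\half}$, and that the $\sqrt N$ boost comes from ``the Frobenius norm of the strip of columns.'' Both parts are wrong. First, for $\gamma>1$ the Schur correction $N^{-2\gamma}G_{12}(A_2+N^{-\gamma}G_{22}-zI)^{-1}G_{21}$ has entries of size $N^{1-2\gamma}\|(A_2+\cdots)^{-1}\|$, which is \emph{smaller} than the intrinsic scale $N^{-\gamma}$ whenever the good-block resolvent has norm below $N^{\gamma-1}$; it does not supply a $\sqrt N$ boost. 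Second, and decisively: if a bad block of size $n=a\log N$ were perturbed by $N^{-\gamma}$ times an $n\times n$ Gaussian alone, its characteristic polynomial would have constant term governed by the single entry $G_{n,1}$, giving circle radius $e^{-\gamma/a}$ rather than the correct $r_i=e^{(-\gamma+\half)/a}$. (Test case: all blocks $T^{\log N}$, $z=0$. Your block-by-block analysis predicts $\frac1N\log|\det\model|\to-\gamma$, but the correct limit is $-\nu=-\gamma+\half$.) The extra $\half$ does not come from any single block: it comes from a \emph{global} cofactor term in which one takes a $Q\times Q$ Gaussian minor ($Q=$ number of bad blocks) by selecting the last row and first column from each bad block simultaneously, so that $\det\approx N^{-\gamma Q}\sqrt{Q!}\approx N^{-\nu Q}$. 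The paper realizes this by permuting the first column and last row of each bad block to the far right/bottom, turning the lower-right $Q\times Q$ corner into \emph{pure} Gaussian noise $N^{-\gamma}G_Q$; after this the remaining bad blocks become upper-bidiagonal with ones on the diagonal (hence stable), a second-moment argument controls $\det(A+\text{noise})$, and a stochastic-domination lemma handles $|\det(N^{-\gamma}G_Q-Z)|$ without needing to understand $Z$. Your Schur reduction, which keeps the full Jordan structure of the bad blocks, never produces such a Gaussian corner and therefore cannot recover the $-\nu Q\log N$ contribution; nor do you address anticoncentration of the resulting determinant, which is precisely where the paper's Gaussian stochastic-domination lemma is indispensable.
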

\begin{remark}
	\label{remark-ofer1}
	If the sequence $\{\mu_N\}$ is  tight but not necessarily
	convergent, one could
	rephrase Theorem \ref{thm:main0} as the statement that
	$d(L_N,\mu_N)\to_{N\to\infty} 0$ in probability,
	where $d$ is any metric compatible
	with weak convergence.
\end{remark}

Theorem~\ref{thm:main0} is an immediate consequence of
our main result, Theorem~\ref{thm:main1} below, which
handles also the case $\gamma\in (1/2,1]$ at the cost of imposing
an extra condition, 
essentially
that circles
arising from polynomially large blocks 
cover only a small portion of the plane;
we now make this extra condition precise.

Fix $\BADeps > 0$ satisfying $\BADeps < 2\gamma-1.$ Set 
\[
	\GOODz = \GOODz(\BADeps)=\left\{ z \in \C: \forall i \in [\ell(N)],
	\min(a_i \log(N),|1-|z-c_i|^2|^{-1}) < N^{2\gamma - 1 - \BADeps}
	\right\}
\]
and
  \[
	  \AllGOODz=\AllGOODz(\BADeps)
	  \Def\bigcup_{N=k}^\infty \bigcap_{k=1}^\infty  \GOODz.
  \]

\begin{assumption}
  \label{ass:small_area}
  There exists $\BADeps\in (0,2\gamma-1)$ so that
   $\C\setminus \AllGOODz(\epsilon')$ has Lebesgue measure $0.$
\end{assumption}
Note that Assumption \ref{ass:small_area} trivially holds when $\gamma>1$ by
choosing $\BADeps=\gamma-1$.
Our main result holds in the regime $\gamma>1/2$ under
Assumption 
\ref{ass:small_area}.

\begin{theorem}
\label{thm:main1}
Suppose that $\gamma > \half$, $\ell(N)=o(N)$  and
Assumption~\ref{ass:small_area} holds. Suppose further that there is a compact $K
\subseteq \C$ so that all $\mu_N$ are supported on $K,$ and suppose that
there is a probability measure $\mu$ so that $\mu_N \weakto \mu.$ Then the
empirical measure $\ESD$ of $M^N + N^{-\gamma} G^N$ converges to
$\mu$ weakly in probability.
\end{theorem}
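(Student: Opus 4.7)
The plan is to invoke Girko's Hermitization method. For each $z\in\C$ write
\[
U_N(z)\Def -\frac{1}{N}\log\bigl|\det(\model-zI)\bigr|=-\frac{1}{N}\sum_{k=1}^N\log\sigma_k(\model-zI),
\]
so that $U_N$ is the logarithmic potential of $\ESD$. By standard potential-theoretic arguments, it suffices to show that for Lebesgue-a.e. $z\in\C$ one has $U_N(z)\to U_\mu(z)$ in probability, together with a uniform integrability condition for $U_N$ on compact sets. Since $m_{c,r}$ is the uniform law on a circle, a direct computation yields
\[
-U_{\mu_N}(z)=\sum_{i=1}^{\ell(N)}\frac{a_i\log N}{N}\max\bigl(\log|z-c_i|,\log r_i\bigr),
\]
so the target formula is itself block-wise and reduces Theorem~\ref{thm:main1} to establishing these block-wise contributions.

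The first main step is an approximate block-wise factorization: for every $z\in\GOODz(\BADeps)$, which by Assumption~\ref{ass:small_area} has full Lebesgue measure eventually,
\[
\frac{1}{N}\log\bigl|\det(\model-zI)\bigr|=\frac{1}{N}\sum_{i=1}^{\ell(N)}\log\bigl|\det(B^i-zI+N^{-\gamma}G^{(i)})\bigr|+o_{\mathbb{P}}(1),
\]
where $G^{(i)}$ is the $i$-th diagonal sub-block of $G^N$. Since the off-diagonal sub-blocks of $N^{-\gamma}G^N$ couple the Jordan blocks, this factorization is nontrivial: the plan is to peel off blocks one at a time via successive Schur complements along the block-diagonal structure, bounding each error by the operator norm of the off-diagonal noise against the resolvent of the accumulated Schur complement. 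The resolvent estimate is exactly where Assumption~\ref{ass:small_area} enters: at a good $z$, for each $i$ either $B^i$ is short (so trivial bounds are enough) or $\bigl||z-c_i|^2-1\bigr|^{-1}$ is polynomially bounded, which controls $\|(B^i-zI)^{-1}\|$ at the regime where the Jordan resolvent changes size. The hypothesis $\ell(N)=o(N)$ then ensures that the accumulated error over all blocks, once divided by $N$, is $o(1)$.

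The second main step is a one-block asymptotic: for each Jordan block $B^i$ of dimension $n_i=a_i\log N$ at eigenvalue $c_i$,
\[
\frac{1}{n_i}\log\bigl|\det(B^i-zI+N^{-\gamma}G^{(i)})\bigr|\to\max\bigl(\log|z-c_i|,\log r_i\bigr)\quad\text{in probability.}
\]
Write $n_i\log r_i=(\tfrac{1}{2}-\gamma)\log N$; the two regimes $|z-c_i|>r_i$ and $|z-c_i|<r_i$ correspond respectively to the resolvent of $B^i-zI$ being well-conditioned (so that the $N^{-\gamma}G^{(i)}$ perturbation changes the log-determinant only by $o(n_i)$), and to the determinant being dominated by the noise, producing the $n_i\log r_i$ contribution. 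Both cases reduce to explicit analysis of the singular values of a single Jordan block perturbed by small Gaussian noise, in the spirit of~\cite{Guionnet14,Sniady02}.

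To close the argument, one needs a uniform-integrability bound on $U_N$, which in turn follows from a lower bound on the smallest singular value of $\model-zI$. A Sankar--Spielman--Teng-type estimate for Gaussian-perturbed matrices gives $\sigma_{\min}(\model-zI)\geq N^{-K}$ with probability $1-o(1)$ for some fixed $K$, uniformly in $z$ on a compact set; together with the deterministic upper bound $\|\model-zI\|=O(1)$ on $K$, this suffices to pass from a.e.\ pointwise convergence of $U_N$ to $L^1_{\mathrm{loc}}$-convergence, and hence to weak convergence of $\ESD$ to $\mu$ in probability. The principal obstacle is the block-wise factorization in the first step: controlling the off-diagonal Gaussian coupling while allowing blocks of polynomial size (i.e.\ $a_i$ as large as $N/\log N$) is precisely what forces the quantitative form of Assumption~\ref{ass:small_area}, and is the reason that for $\gamma\in(\tfrac12,1]$ an extra hypothesis beyond that of Theorem~\ref{thm:main0} is needed.
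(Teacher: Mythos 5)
Your overall framework (Hermitization via the logarithmic potential, reducing to pointwise in-probability convergence of $\LP(z)$ to $\LP[\mu](z)$ plus tightness) matches the paper. But the decisive step in your proposal is false, and the paper itself tells you why.

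The claimed approximate block-wise factorization
\[
\frac{1}{N}\log\bigl|\det(\model-zI)\bigr|=\frac{1}{N}\sum_{i}\log\bigl|\det(B^i-zI+N^{-\gamma}G^{(i)})\bigr|+o_{\mathbb{P}}(1)
\]
cannot hold. If it did, the limit law for $A^N_{\log N}+N^{-\gamma}G^N$ would coincide with the limit law of the block-diagonal matrix with each $T^b$ replaced by $T^b+N^{-\gamma}G^b$; but the Discussion section of the paper points out explicitly that the latter gives the circle of radius $e^{-\gamma}$, whereas Theorem~\ref{thm:main1} gives radius $e^{-\gamma+\half}$. The off-diagonal Gaussian coupling is not an $o_{\mathbb{P}}(1)$ correction---it changes the leading-order value of the log potential. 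Correspondingly, your Step~2 is also stated with the wrong constant: for a single $n_i\times n_i$ Jordan block perturbed by $N^{-\gamma}G^{(i)}$ with $n_i=a_i\log N$, one has $\frac{1}{n_i}\log|\det(B^i-zI+N^{-\gamma}G^{(i)})|\to\max(\log|z-c_i|,-\gamma/a_i)$, not $\max(\log|z-c_i|,(\half-\gamma)/a_i)$; inside the circle the determinant is governed by the rank-one Gaussian contribution $\sim N^{-\gamma}$, because within-block minors have size at most $a_i\log N$ and $\sqrt{(a_i\log N)!}$ is negligible next to $N^{a_i\log N/2}$.

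The missing $\half$ is exactly the contribution of Gaussian minors that \emph{span many blocks simultaneously}: taking one row and one column from each of $Q$ blocks produces a $Q\times Q$ Gaussian minor with $|\det|\sim\sqrt{Q!}\approx N^{Q/2}$ once $Q\log N$ is comparable to $N$, so the effective noise scale is $N^{-\gamma}\cdot N^{1/2}=N^{-\nu}$. This is the mechanism the paper exploits. Rather than successive per-block Schur complements, the paper performs a \emph{single} global row/column permutation that isolates, into one $Q\times Q$ corner (with $Q\le\ell=o(N)$), exactly one row and one column from each ``problematic'' block, yielding the form \eqref{eq:LBblock}; it then applies the Schur complement formula once, controls $\det(A+G_1)$ by a second-moment argument (this is where Assumption~\ref{ass:small_area} enters, through Lemma~\ref{lem:minor_sum}), and handles the pure-noise Schur complement $\det(G_2-C)$ via the stochastic-domination Lemma~\ref{lem:sd} together with the negative-moment estimate of Lemma~\ref{lem:Gdet_lb}. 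Your successive per-block peeling loses the cross-block minors and therefore cannot produce the $+\half$ shift, so the argument fails at Step~1 regardless of how carefully the resolvent bounds are executed.
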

As mentioned before, Assumption \ref{ass:small_area} holds automatically when 
$\gamma>1$, and therefore 
Theorem~\ref{thm:main0} follow immediately from
Theorem~\ref{thm:main1}.
An illustration of the $\gamma$-dependency 
in Theorem \ref{thm:main1} is given in Figure~\ref{fig:1}.

Another case in which $\C\setminus\AllGOODz$ has Lebesgue measure 
$0$ is when all the
eigenvalues of $M^N$ are the same.

\begin{corollary}
\label{cor:main0.5} Suppose that $\gamma > \half$, that
$\ell(N)=o(N)$ and that $c_i=c_1$, $i=2,\dots,\ell$.
Suppose further that there is a compact $K \subseteq \C$ so that
all $\mu_N$ are supported on $K,$ and that 
$\mu_N \weakto \mu$
for some
probability
measure $\mu$.
Then, the empirical measure $\ESD^\model$ of $M_N + N^{-\gamma} G_{N}$ 
converges to
$\mu$ weakly in probability.
\end{corollary}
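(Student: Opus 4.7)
The plan is to deduce Corollary~\ref{cor:main0.5} directly from Theorem~\ref{thm:main1} by verifying Assumption~\ref{ass:small_area} under the hypothesis that $c_i(N) = c_1(N)$ for every $i$. Fix any $\BADeps \in (0, 2\gamma-1)$. Because all the $c_i$ coincide, the quantity $|1-|z-c_i|^2|^{-1}$ appearing in the definition of $\GOODz$ is independent of $i$, so the minimum there is bounded above by $|1-|z-c_1(N)|^2|^{-1}$. Hence whenever this quantity is strictly below $N^{2\gamma-1-\BADeps}$ we automatically have $z \in \GOODz$, yielding the inclusion
\[
\C \setminus \GOODz \;\subseteq\; A_N \;\Def\; \left\{ z \in \C : \bigl|1 - |z - c_1(N)|^2\bigr| \leq N^{-(2\gamma-1-\BADeps)} \right\}.
\]
Each $A_N$ is a thin annulus around the unit circle centered at $c_1(N)$, of Lebesgue area $O(N^{-(2\gamma-1-\BADeps)})$.

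Next, I would show that $c_1(N)$ converges in $\C$. Since $\mu_N = \sum_i (a_i \log N / N)\, m_{c_1(N), r_i}$ is a convex combination of rotationally symmetric circle measures all centered at the common point $c_1(N)$, its first moment evaluates to $\int z\, d\mu_N = c_1(N)$. Combined with the uniform compact support of $\{\mu_N\}$ and the hypothesis $\mu_N \weakto \mu$, this forces $c_1(N) \to c_\infty \Def \int z\, d\mu$.

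With $c_1(N) \to c_\infty$ in hand, for any $\delta > 0$ all but finitely many annuli $A_N$ are contained in the strip $\{1-\delta < |z - c_\infty| < 1+\delta\}$, so
\[
\limsup_{N \to \infty} A_N \;\subseteq\; \bigcap_{\delta>0}\{1-\delta < |z-c_\infty| < 1+\delta\} \;=\; \{|z-c_\infty|=1\},
\]
a set of Lebesgue measure zero. Since $\C \setminus \AllGOODz = \limsup_N (\C \setminus \GOODz) \subseteq \limsup_N A_N$, Assumption~\ref{ass:small_area} holds, and Theorem~\ref{thm:main1} yields the conclusion.

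The only delicate point is the regime $\gamma$ just above $\half$: the area bound $O(N^{-(2\gamma-1-\BADeps)})$ on individual annuli need not be summable in $N$, so a direct Borel--Cantelli argument would not force $\limsup_N A_N$ to have zero area. The convergence $c_1(N) \to c_\infty$, which the shared-eigenvalue hypothesis buys us via the first-moment computation on $\mu_N$, is exactly what circumvents this obstacle by piling the annuli onto a single limit circle.
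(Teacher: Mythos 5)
Your proof is correct and follows the paper's intended approach: show that $\C \setminus \AllGOODz$ lies in a single circle of Lebesgue measure zero, then invoke Theorem~\ref{thm:main1}. You also supply a useful detail the paper's one-line remark glosses over — since $c_1 = c_1(N)$ can vary with $N$, the thin annuli could a priori drift, and you rule this out by computing $\int z\, d\mu_N = c_1(N)$ and combining the uniform compact support with $\mu_N \weakto \mu$ to pin down a single limit center $c_\infty$, which is exactly what makes the $\limsup$ argument close without any (possibly unavailable) Borel--Cantelli area summability.
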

In this case, one may check that $\C\setminus \AllGOODz$ is in fact contained in the circle of radius one centered at $c_1.$ See Figure~\ref{fig:2} for an illustration. 

\begin{figure}[t]
\begin{center}
\begin{tikzpicture}
\begin{axis}[
xlabel = {$\Re z$},
ymin = -2, ymax = 1,
xmin = -2, xmax = 2,
width = 0.55\textwidth,
scale mode=scale uniformly,
legend entries={$\gamma = 1.0$}
]
\addplot+[opacity=0, fill opacity=1.0, only marks,mark size=0.4pt] table {./RingsReal1_5000.dat};
\end{axis}
\end{tikzpicture}
\begin{tikzpicture}
\begin{axis}[
xlabel = {$\Re z$},
ymin = -2, ymax = 1,
xmin = -2, xmax = 2,
yticklabel=\ ,
width = 0.55\textwidth,
scale mode=scale uniformly,
legend entries={$\gamma = 0.8$}
]
\addplot+[opacity=0, fill opacity=1.0, only marks,mark size=0.4pt] table {./RingsReal08_5000.dat};
\end{axis}
\end{tikzpicture}
\caption{
 The eigenvalues of a deterministic $5000\times 5000$ matrix perturbed by two different magnitudes of noise.
 The matrix consists of $5$ types of blocks centered at $-1,0,1,-0.5-0.8i,\text{and } 0.5-0.8i.$ The sum of the dimensions of the blocks for each type is roughly the same.  Each individual block is of dimension $\lceil\log 5000\rceil = 9.$
Note the finite $N$ effects.}
\label{fig:1}
\end{center}
\end{figure}
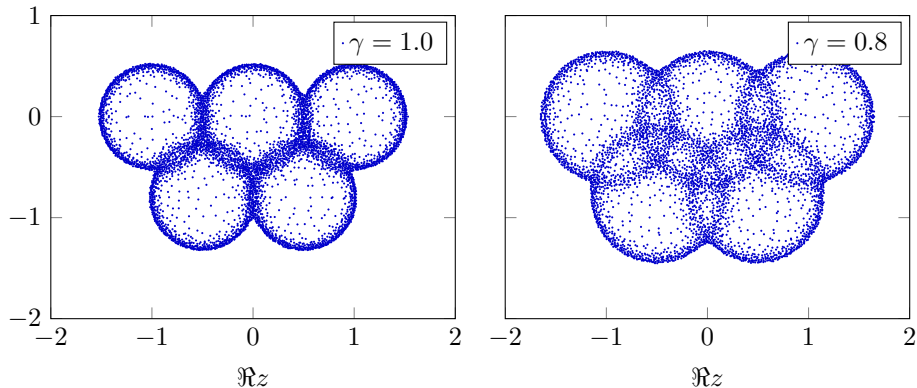

\subsection*{Discussion}

Theorem \ref{thm:main1} shows that $A_{\log N}^N + N^{-\gamma}G^N$ has
empirical eigenvalue distribution converging to a measure which has mass $1$
uniformly distributed on the circle of radius $e^{-\gamma+\half}.$  In particular,
this does not agree with what would be seen if the blocks were perturbed
separately.  If each $T^b$ in \eqref{eq:tbn} were replaced by $T^b +
N^{-\gamma} G^b,$ then with $b = \log N(1+o(1)),$ the resulting matrix
would have eigenvalue distribution converging to a circle of radius
$e^{-\gamma}.$ 
Thus,
applying noise only to the diagonal blocks of
$A^N_{\log N}$ does not
make the matrix insensitive to further perturbation of the off-diagonal entries.

\pgfplotsset{
    every non boxed x axis/.style={} 
}
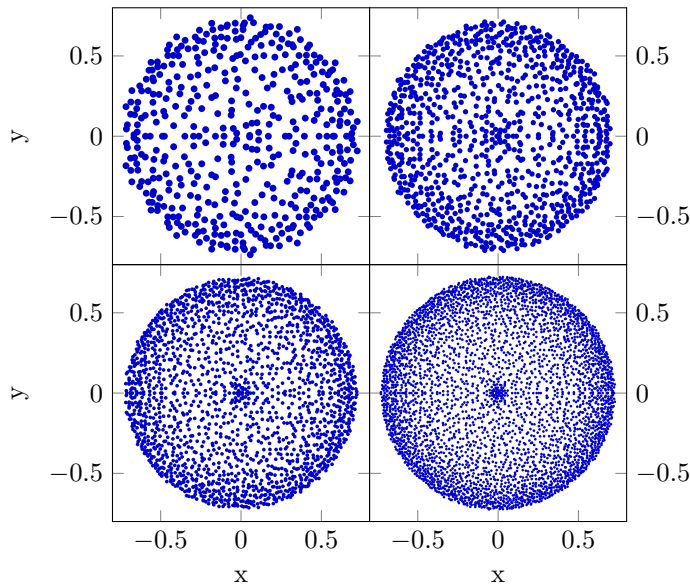
\begin{figure}[t]
\begin{center}
\begin{tikzpicture}
\begin{groupplot}[
group style={
group size=2 by 2,
xticklabels at=edge bottom,
horizontal sep=0pt,
vertical sep=0pt,
xlabels at=edge bottom,
ylabels at=edge left
},
width=6cm,
xmin = 0, xmax = 1,
xlabel=x,
ylabel=y
]

\nextgroupplot[
  	ymin=-0.8,ymax=0.8,
  	xmin=-0.8,xmax=0.8,
	scale mode=scale uniformly,
	height=5.0cm]
\addplot+[blue, opacity=0, fill opacity=1.0, only marks, mark size=1.3pt, samples=50] table {./DensityReal075_500.dat};

\nextgroupplot[
  	ymin=-0.8,ymax=0.8,
  	xmin=-0.8,xmax=0.8,
	scale mode=scale uniformly,
	yticklabel pos=right,
	height=5.0cm]
\addplot+[blue, opacity=0, fill opacity=1.0, only marks, mark size=1.0pt, samples=50] table {./DensityReal075_1000.dat};

\nextgroupplot[
  	ymin=-0.8,ymax=0.8,
  	xmin=-0.8,xmax=0.8,
	scale mode=scale uniformly,
	height=5.0cm]
\addplot+[black, opacity=0, fill opacity=1.0, only marks, mark size=0.700pt, samples=50] table {./DensityReal075_2000.dat};

\nextgroupplot[
  	ymin=-0.8,ymax=0.8,
  	xmin=-0.8,xmax=0.8,
	scale mode=scale uniformly,
	yticklabel pos=right,
	height=5.0cm]]
\addplot+[black, opacity=0, fill opacity=1.0, only marks, mark size=0.5pt, samples=50] table {./DensityReal075_4000.dat};

\end{groupplot}

\end{tikzpicture}
\caption{
For various values of $N,$ set $M^N$ to be the matrix with all eigenvalues equal to $0$ and approximately equal number of blocks of size $0,1,2,\ldots, \lceil \log N \rceil.$  
From left-to-right and top-to-bottom, the eigenvalues of $M^N + N^{-\gamma}G^N$ are given
for $\gamma=\nicefrac{3}{4}$ and $N=500,1000,2000,\text{ and }4000.$  The limiting density is given by $\nicefrac{-C}{r \log r}\one[r \leq e^{-\nicefrac{1}{4}}]$ for normalizing constant $C.$ 
}
\label{fig:2}
\end{center}

\end{figure}

While Theorem \ref{thm:main1} covers many choices of Jordan blocks, it does
put technical limitations on the types of
matrices and noise that can be used. In particular it would be of interest
to remove the restriction on $\ell(N).$
As in \cite{Wood14},
the Gaussian assumption on the noise probably could also be weakened,
though this requires a better understanding of how small $\det(G + C)$ can be,
where $G$ is a matrix of i.i.d. elements and $C$ is some arbitrary matrix
(in particular, without a priori estimates on the norm of $C$); 
such control is
not currently available 
for small singular
values of $G+C$ without putting some a-priori conditions on $C$,
see e.g. 
\cite{Tao10} for the case of minimal singular value.

Far beyond these possible extensions, it would be interesting to generalize
Theorem~\ref{thm:main1} to cover matrices that are not in Jordan form,
i.e. proving a theorem about the noise perturbation of
$S^N M^N (S^N)^{-1}$ for $M^N$ in Jordan
form.
This however seems to require some constraints
on the sequence $S^N$ so that they do not become
progressively ill-conditioned going down the sequence.


\subsection*{Proof approach}

The approach is based on the in-probability convergence of the logarithmic potential of $\ESD$ to the logarithmic potential of the corresponding measure,
which appears frequently in the study of non-normal random matrices (see~\cite{Chafai12}).  For a compactly supported probability measure $\rho $ on $\C,$ define
\(
\LP[\rho](z) = \int_{\C} \log | z- x|\, d\rho(x).
\)
Note the logarithmic potential of $\ESD$ can also be expressed as
\(
\LP(z) = \frac{1}{N} \log | \det( \model - zI ) |,
\)
where $I$ is the identity matrix.

To show the desired convergence of $\ESD$ to $\mu,$ it suffices to show that:
\begin{itemize}
\item[a)] 
	There is a compact $K \subseteq \C$ so that for all $\epsilon >0,$
		\(
		\Pr[ \ESD( K^c) > \epsilon] \to 0.
		\)
	\item[b)] For almost every $z \in \C,$ $\LP(z) \Pto \LP[\mu](z).$
\end{itemize}
For a proof, see Theorem 2.8.3 of~\cite{TaoBook}. The tightness
condition a) 
is standard, and quickly follows from the assumed compact support of the collection $\{\mu_N\}.$ Thus, one needs to checks the convergence in b). Toward
this end, we first discuss the convergence of $\LP[\mu_N]$
to $\LP[\mu]$.

Since
$\mu$ is a probability measure, 
$\LP[\mu](z)\in
L^p_{\operatorname{loc}}$ for each 
$1 \leq p < \infty $. In particular,
$\LP[\mu](\cdot)$ 
is finite almost everywhere.
Together with the existence of the compact $K$ that contains the support of
the different $\mu_N$, we also have the uniform integrability of
$\LP[\mu_N]$ on compact subsets of $\C$. Together with the 
weak convergence $\mu_N\weakto \mu$, this implies 
that $\LP[\mu_N]\to \LP[\mu]$ in $L^p_{\operatorname{loc}}$.
Passing to subsequences if necessary, we deduce the convergence 
of $\LP[\mu_N](z)$ to $\LP[\mu](z)$ for Lebesgue almost every $z$.
%
Thus, the proof of b), and therefore of
Theorem~\ref{thm:main1}, is reduced to showing
\begin{align}
\label{eq:LPL}
\forall z \in \AllGOODz\ : |\LP(z) - \LPL(z)| \Pto 0.
\end{align}

We obtain the convergence in~\eqref{eq:LPL} by showing upper and lower bounds on
$\LP(z)$.  The upper bound is obtained through a careful expansion of the
determinant of $\model - zI = M^N + N^{-\gamma}G^N - zI$ as a linear
combination of the minors of $N^{-\gamma}G^N.$  The minors of $G^N$
are then bounded by a relatively crude union bound (see Lemma~\ref{lem:ub_noise}),
and the sum is estimated by a leading order term analysis.

The upper bound argument works with significantly weaker assumptions than Theorem~\ref{thm:main1}.
In particular Assumption~\ref{ass:small_area} is not used at all. Furthermore,
it should be straightforward to weaken the assumptions on the noise to include entries whose distributions are either non-Gaussian i.i.d., or Gaussian with non-trivial covariance
matrix.

The lower bound, on the other hand, is more delicate.  Here we first apply a sequence of row and column permutations to the matrix to put it in the form
\[
	\model - zI =
	\begin{bmatrix}
		A +G_1 & * \\
		* & G_2 \\
	\end{bmatrix},
\]
(see~\eqref{eq:LBblock}), where $G_1$ and $G_2$ are pure noise matrices and $A$ is stable with respect to Gaussian perturbation.  This representation allows us to compute the determinant
by the Schur complement formula, whose general form 
\[
	\det(\model - zI) = \det(A+G_1)\det(G_2 - C),
\]
where $C$ is some matrix.

As $A$ is stable with respect to Gaussian perturbation, we use a second moment computation
to show that the determinant of $A+G_1$ is a 
good approximation of the determinant of $A$.
By showing that $|\det (G_2 - C)|$ stochastically dominates $|\det G_2|,$
we are able to obviate understanding $C.$  In this step we crucially
use the Gaussian assumption on the matrix, and we believe this portion
of the argument is the largest obstruction to proving the theorem for more
general noise models.

Assumption~\ref{ass:small_area} is necessary for the second moment
estimate.  As can be seen from calculating the variance of $\det(I + zT^N +
N^{-\gamma}G^{N})$ for various $z$ with $|z| < 1$ and $\half <
\gamma \le 1,$ if $z$ is very close $1$ (going to $1$ at some polynomial rate),
the variance can be made to grow to infinity, while the expectation is $1$.
This phenomenon disappears when $\gamma > 1,$ for which reason
Assumption~\ref{ass:small_area} is vacuous for these $\gamma.$
Thus for the purpose of showing $\det(A+G_1) \approx
\det(A),$
the second moment method is an insufficient tool when
$\half < \gamma \le 1.$ It is unclear whether
Assumption~\ref{ass:small_area}
could be weakened or completely omitted by
applying other methods of proof.

\subsection*{Organization}
This paper is organized into 5 sections.
In Section \ref{sec:prelim}, we establish notation that we use throughout the paper as well as many relevant calculations and lemmata that we need for the upper and lower bounds.
In Section \ref{sec:ub}, we show the upper bound on the log potential, and in Section \ref{sec:lb} we show the lower bound.
In Section \ref{sec:final}, we give the proof of Theorem~\ref{thm:main1}.

\section{Preliminaries}
\label{sec:prelim}

In this section we present notation and auxiliary lemmata that are used to simplify the rest of the paper. These are divided according to their general topic. Throughout the paper, whenever we state that a property holds with \emph{high probability} this is meant to say that the probability tends to $1$ as $N$ tends to infinity.

\subsection*{Log potential}
For a natural number $k,$ we let $[k] \Def \{1,2,3,\ldots,k\}.$
To simplify our calculations and definitions, we set
$$\nu\Def\gamma - \half.$$
Also, we often omit the dependence of parameters on $N.$

For each $i \in [\ell],$ define
\[
g_i \Def (-\nu) - a_i \log|z-c_i|.
\]
This allows the log potential $\LPL(z)$ to be given by
\begin{equation}
  \label{eq:target}
  \frac{N}{\log N}\LPL(z) = \sum_{i : g_i \leq 0} a_i \log |z - c_i|-\sum_{i : g_i > 0} \nu.
\end{equation}
Note that the expression is continuous in $z.$
\subsection*{Matrix decomposition}
For an $N\times N$ matrix $A$, and $\row,\col\subseteq [N]$ we write $A[X,Y]$ for the submatrix of $A$ which consists of the rows in $\row$ and the columns in $\col$.

Our goal is to provide upper and lower bounds on $\frac{1}{N} \log |\det(\model -zI)|$ which are arbitrarily close to $\LPL(z)$ for large $N$.
In the process 
of obtaining both bounds, $\model - zI$ is decomposed into a sum of matrices $A$ and $B.$
In general, we may expand the determinant of the sum of two $N\times N$ matrices $A$ and $B$ as
\begin{equation}\label{eq:det_decomposition}
	\det(A+B) = \sum_{\substack{\row,\col \subset [N] \\ |\row|=|\col|}} (-1)^{\sgn(\sigma_\row)\sgn(\sigma_\col)} \det(A[\COMP{\row};\COMP{\col}])\det(B[\row;\col]),
\end{equation}
where $\COMP{\row}:=[N]\setminus \row$, $\COMP{\col}:=[N] \setminus \col$ and $\sigma_Z$ for $Z\in\{\row,\col\}$ is the permutation in $S_N$ which places all the elements of $Z$ before all the elements of $\COMP{Z}$, but preserves the order of elements within the two sets. In particular, observe that the notation $A[\COMP{\row} ; \COMP{\col}]$ denotes the submatrix of $A$ given by deleting the rows in $\row$ and the columns in $\col$.

In our application $A$ will be an \emph{upper bi-diagonal block matrix} with $\ell$ blocks, that is,
a block matrix whose non-zero entries lie on the main diagonal and the first superdiagonal. For such a block matrix,
$A_i$ is used to denote the $i$-th block.  Likewise, $\row_i,\col_i$ are used to denote the rows and columns of
$A_i$ that are contained in $\row$ and $\col$ respectively.
%
This notation allows the decomposition of the determinant of a submatrix of a block matrix as a product of determinants of small matrices.
Here and in the rest of our formulae we always assume the determinant of the matrix of size $0$ to be one.

\begin{lemma}
  \label{lem:unbalanced_minors}
Let $A$ be an $N\times N$ block matrix with $\ell$ blocks, and let $\row,\col\subset [N]$ be such that $|\row|=|\col|$. We have
  \[
  \det(A[\row;\col]) = \begin{cases} \prod_{i=1}^\ell \det( A_i[\row_i; \col_i]) & \forall i\in[\ell]\ :\ |\row_i|=|\col_i|\\
                                     0 & \text{otherwise}\end{cases}.
  \]
\end{lemma}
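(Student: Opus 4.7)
The plan is to reduce Lemma \ref{lem:unbalanced_minors} to the elementary block-diagonal determinant identity applied to the submatrix $A[X;Y]$. The key structural observation is that $A[X;Y]$ inherits block-diagonal form from $A$: the entry of $A[X;Y]$ in row $r \in X_i$ and column $c \in Y_j$ equals $A_{r,c}$, which vanishes whenever $i \ne j$ because $A$ itself is block diagonal. Once we list the elements of $X$ and $Y$ in their natural ascending order, which automatically groups them by block since the $\ell$ blocks correspond to consecutive index ranges in $[N]$, the matrix $A[X;Y]$ becomes a block diagonal matrix whose $i$-th diagonal block is precisely $A_i[X_i;Y_i]$, of size $|X_i| \times |Y_i|$.

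Given this observation, the two cases follow from standard linear algebra. If $|X_i| = |Y_i|$ for every $i \in [\ell]$, each diagonal block is square and the claimed product formula is the usual determinant identity for a block diagonal matrix, which one obtains by a one-step induction on $\ell$ via Laplace expansion along any row of the last block. Conversely, if $|X_j| \ne |Y_j|$ for some $j$, the identity $\sum_i |X_i| = |X| = |Y| = \sum_i |Y_i|$ forces by pigeonhole some index $j^*$ with $|X_{j^*}| > |Y_{j^*}|$. The $|X_{j^*}|$ rows of $A[X;Y]$ indexed by $X_{j^*}$ are supported entirely on the $|Y_{j^*}|$ columns belonging to $Y_{j^*}$, so they span a space of dimension at most $|Y_{j^*}| < |X_{j^*}|$ and are therefore linearly dependent, giving $\det A[X;Y] = 0$.

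This is essentially a bookkeeping lemma and presents no substantive obstacle. The only point worth verifying explicitly is that no sign is introduced when passing from $A[X;Y]$, with rows and columns in their natural ascending order inherited from $[N]$, to its block-diagonal representation; this is automatic because $X_i$ already occupies a contiguous run at the correct position within the ordered list for $X$, and similarly for $Y$, so no row or column permutation is required.
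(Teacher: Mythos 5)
Your proof is correct and takes essentially the same approach as the paper: both rest on the observation that $A[X;Y]$ inherits block-diagonal structure with the $i$-th (possibly rectangular) diagonal block being $A_i[X_i;Y_i]$, from which the balanced case is the standard block-diagonal determinant identity and the unbalanced case follows because some block has more rows than columns. The only cosmetic difference is in the unbalanced case: the paper expands by the Leibniz formula and kills every permutation by pigeonhole, whereas you phrase the same fact as linear dependence of the rows indexed by $X_{j^*}$; either is fine.
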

\begin{proof}
We let $k=|\row|=|\col|$. Suppose the second case holds.
By permuting the order of blocks, we may assume without loss of generality $|\row_1|>|\col_1|$. Expanding the determinant of $U[\row;\col]$ by the Leibniz formula to get
\[\det( A[\row; \col])=\sum_{\sigma\in S_{k}}\sgn(\sigma)\prod_{i=1}^{k}A[\row; \col]_{i,\sigma(i)},\]
we observe that by the pigeonhole principle every $\sigma\in S_{k}$ must satisfy $\sigma(i)>|\col_1|$ for some $i\le|\row_1|$. For this $i$ we have $A[\row; \col]_{i,\sigma(i)}=0,$
and therefore the entire product nullifies. In the first case where $|X_i|=|Y_i|$ for all $i$, it is straightforward to check that the determinant is that of a block matrix,
and thus it is the product of the block determinants.
\end{proof}

Additionally, when $A$ is an upper bi-diagonal block matrix, we can further simplify Lemma~\ref{lem:unbalanced_minors}.
\begin{lemma}
  \label{lem:bidiag_det}
Let $A$ be an $N\times N$ block matrix with $\ell$ blocks, and let $\row,\col\subset [N]$ be such that $|\row|=|\col|$. If in addition $A$ is upper bi-diagonal, then
$\det(A[\row;\col])$ is given by the product of its diagonal entries.
\end{lemma}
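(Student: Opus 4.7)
The plan is to reduce the claim to a single block via Lemma~\ref{lem:unbalanced_minors}, and then analyze the Leibniz expansion of a single upper bi-diagonal minor, exploiting the extreme sparsity of such a matrix. Write $\row = \{x_1 < \cdots < x_k\}$ and $\col = \{y_1 < \cdots < y_k\}$, so that the diagonal entries of $A[\row;\col]$ are $A[x_j, y_j]$. If the block decomposition is unbalanced, meaning $|\row_i| \neq |\col_i|$ for some $i$, then Lemma~\ref{lem:unbalanced_minors} gives $\det(A[\row;\col]) = 0$; a pigeonhole argument identical in spirit to the one used there shows that at least one index $j$ has $x_j$ and $y_j$ lying in different blocks, so that $A[x_j, y_j] = 0$ by block diagonality, and the diagonal product also vanishes. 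Hence it suffices to treat the balanced case, in which Lemma~\ref{lem:unbalanced_minors} factors $\det(A[\row;\col])$ as $\prod_i \det(A_i[\row_i;\col_i])$ and the problem reduces to a single upper bi-diagonal block.

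For a single block, I would expand by the Leibniz formula,
\[
\det(A_i[\row_i;\col_i]) = \sum_{\sigma \in S_{k_i}} \sgn(\sigma) \prod_j A_i[x_j, y_{\sigma(j)}],
\]
and use the fact that, since $A_i$ is upper bi-diagonal, the factor $A_i[x_j, y_{\sigma(j)}]$ vanishes unless $y_{\sigma(j)} \in \{x_j, x_j+1\}$. The key step is to show that only $\sigma = \mathrm{id}$ contributes. For $j < j'$, the survival conditions force $y_{\sigma(j)} \leq x_j + 1 \leq x_{j'} \leq y_{\sigma(j')}$; the only way this chain fails to be strictly increasing is when $x_{j'} = x_j + 1 = y_{\sigma(j)}$, in which case the constraint $y_{\sigma(j')} \in \{x_{j'}, x_{j'}+1\}$ together with distinctness of the $y$'s forces $y_{\sigma(j')} = x_{j'}+1 > y_{\sigma(j)}$. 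Hence $\sigma$ preserves order and must be the identity, and the Leibniz sum collapses to $\prod_j A_i[x_j, y_j]$, as claimed.

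The only subtle point is the strict-inequality argument handling the boundary case $y_{\sigma(j)} = x_j + 1$, which crucially uses the integer spacing of row and column indices; everything else is a routine block-matrix computation built on the previously established Lemma~\ref{lem:unbalanced_minors}.
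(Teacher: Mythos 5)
Your proof is correct, but it takes a genuinely different and somewhat longer route than the paper's. The paper never invokes Lemma~\ref{lem:unbalanced_minors} here: it observes (implicitly) that a block-diagonal arrangement of upper bi-diagonal blocks is itself an upper bi-diagonal $N\times N$ matrix, and so applies the Leibniz formula directly to $A[\row;\col]$ once, for the whole matrix. The surviving permutations must satisfy $\scol_{\sigma(i)} \in \{\srow_i, \srow_i+1\}$, which, combined with strict monotonicity of the $\srow_i$ and $\scol_i$, gives the weak chain $\sigma(1)\le\sigma(2)\le\cdots\le\sigma(k)$; the fact that $\sigma$ is a bijection then upgrades every inequality to a strict one, so $\sigma=\mathrm{id}$. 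Your version instead first splits into balanced and unbalanced cases, reduces to a single block via Lemma~\ref{lem:unbalanced_minors}, and then proves \emph{strict} monotonicity of $\sigma$ directly by a case analysis on the boundary $\scol_{\sigma(j)}=\srow_j+1$. Both work. What the paper's version buys you is brevity: the block reduction is unnecessary once you notice the whole matrix is upper bi-diagonal, and the bijectivity trick replaces your boundary-case analysis. What your version buys is a clean separation of the ``block geometry'' from the ``bi-diagonal sparsity,'' and in the unbalanced case it explicitly verifies that the right-hand side (the product of diagonal entries) also vanishes, a point the paper's one-shot argument handles without comment. One small caution: your unbalanced-case argument needs the indices of all \emph{preceding} balanced blocks to line up before the pigeonhole kicks in at the first unbalanced block; this is true, but it deserves a sentence rather than ``identical in spirit.''
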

\begin{proof}

   We let $k=|\row|=|\col|$ and write
   \begin{align*}
     \row &= \left\{ \srow_1<\srow_2<\ldots<\srow_k \right\} \text{and} \\
     \col & =\left\{ \scol_1<\scol_2<\ldots< \scol_k \right\}.
   \end{align*}
  Expand the determinant of $A[\row;\col]$ by the Leibniz formula to get
 \[\det( A[\row; \col])=\sum_{\sigma\in S_{k}}\sgn(\sigma)\prod_{i=1}^{k}A[\row; \col]_{i,\sigma(i)},\]

The claim is equivalent to showing that for all $\sigma$ not equal to the identity,
  \(
  \prod_{i=1}^k A_{\srow_i, \scol_{\sigma(i)}} = 0.
  \)
  From bi-diagonality, if any $\scol_{\sigma(i)} \notin \{\srow_i, \srow_i+1\},$ then
  $A_{\srow_i, \scol_{\sigma(i)}}$ is $0,$ and the claim is complete.  Thus, we may restrict ourselves to $\scol_{\sigma(i)} \in \{\srow_i, \srow_i+1\}$ for all $i\in[k]$.
  Since $\srow_i$ and $\scol_i$  are strictly increasing, we deduce that
 $\sigma(1) \leq \sigma(2) \leq \cdots \leq \sigma(k).$  As $\sigma$ is a permutation, this forces all these inequalities to be strict and hence $\sigma$ is the identity.  Thus only the identity permutation can possibly have $\prod_{i=1}^k A_{\srow_i, \sigma(\scol_i)} \neq 0.$
\end{proof}

We conclude this part by presenting yet another simplification of the formula for $\det(U[\COMP{\row};\COMP{\col}])$
when $U$ is a single block of the form $I + zT^N.$

\begin{lemma}
\label{lem:block_calc}
Let $U = I + zT^N.$
For $\row,\col \subseteq [N]$ with $|\row|=|\col|=k,$ write
$\row = \left\{ \srow_1<\srow_2<\ldots<\srow_k \right\}$ and $\col=\left\{ \scol_1<\scol_2<\ldots< \scol_k \right\}$.  Then
\[
\det(U[\COMP{\row};\COMP{\col}]) =
\prod_{i=1}^k z^{\srow_i - \scol_i}
\one[ \scol_i \le \srow_i < \scol_{i+1}, \forall i, 1 \leq i \leq k],
\]
where we take $\scol_{k+1} = \infty.$
\end{lemma}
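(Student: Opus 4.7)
The plan is to invoke Lemma~\ref{lem:bidiag_det} directly on $U = I + zT^N$, which is evidently upper bi-diagonal, applied with row set $\COMP{\row}$ and column set $\COMP{\col}$. Let $r_1 < \cdots < r_{N-k}$ and $c_1 < \cdots < c_{N-k}$ be the sorted enumerations of $\COMP{\row}$ and $\COMP{\col}$. Lemma~\ref{lem:bidiag_det} immediately gives
\[
\det(U[\COMP{\row};\COMP{\col}]) = \prod_{i=1}^{N-k} U_{r_i,c_i}.
\]
Reading off the nonzero pattern of $U$, the factor $U_{r_i,c_i}$ equals $1$ when $c_i = r_i$, equals $z$ when $c_i = r_i + 1$, and is $0$ otherwise. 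So the product becomes $z^{\sum_i(c_i - r_i)}$ times the indicator that $c_i \in \{r_i, r_i+1\}$ for every $i \in [N-k]$.

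Next I would check two bookkeeping identities. For the exponent, the partitions $\row \sqcup \COMP{\row} = [N] = \col \sqcup \COMP{\col}$ give $\sum_i r_i + \sum_j x_j = N(N+1)/2 = \sum_i c_i + \sum_j y_j$, so $\sum_i(c_i - r_i) = \sum_j(x_j - y_j)$, matching the claimed exponent. It then suffices to verify that the non-vanishing condition $c_i \in \{r_i, r_i+1\}$ for every $i$ is equivalent to the interlacing $y_j \le x_j < y_{j+1}$ for every $j \in [k]$, where $y_{k+1} = \infty$.

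To identify these two conditions, I would work with the counting functions $A(p) = |\row \cap [p]|$ and $B(p) = |\col \cap [p]|$ on $[N]$, together with the defining relations $r_i = i + A(r_i)$ and $c_i = i + B(c_i)$. The lower bound $c_i \ge r_i$ for every $i$ translates (by comparing the counts of $\COMP{\row} \cap [1,p]$ and $\COMP{\col} \cap [1,p]$) to the pointwise inequality $A(p) \le B(p)$ throughout $[N]$, and this in turn is equivalent to $y_j \le x_j$ for every $j$. For the complementary bound $c_i \le r_i + 1$, I would argue by contrapositive: if some $c_i \ge r_i + 2$, then at $p = r_i$ one has $g(p) = p - B(p) < i = p - A(p)$ while at $p = r_i + 1$ still $g(p) < i$, producing an index $j$ with $y_{j+1} \le x_j$, which violates the interlacing; the reverse implication is analogous. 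The main (and only) obstacle is keeping the combinatorics of these counting functions straight, since Lemma~\ref{lem:bidiag_det} does essentially all of the algebraic work.
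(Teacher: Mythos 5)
Your proof is correct and shares the same opening move with the paper: both apply Lemma~\ref{lem:bidiag_det} to reduce the determinant to a product of diagonal entries of the submatrix, and both then read off the bi-diagonal nonzero pattern. Where you diverge is in translating the condition ``$c_i \in \{r_i, r_i+1\}$ for all $i$'' into the interlacing ``$y_j \le x_j < y_{j+1}$ for all $j$''. The paper partitions $[N]$ into the intervals $U_j = \{x_{j-1}+1, \ldots, x_j\}$ and shows, via a counting argument on each $U_j$, that each of $U_1, \ldots, U_k$ must contain exactly one element of $\col$ and $U_{k+1}$ none, from which the interlacing is immediate. You instead work with the counting functions $A(p), B(p)$ and split the non-vanishing condition into two \emph{independent} equivalences: $(\forall i: c_i \ge r_i) \iff (\forall j: y_j \le x_j)$ and $(\forall i: c_i \le r_i+1) \iff (\forall j: x_j < y_{j+1})$. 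It is a nontrivial observation that this split is valid (each half holds on its own, without assuming the other), and you also identify the neat $N(N+1)/2$ trick for the exponent, which the paper instead verifies block-by-block. Both routes are sound; the paper's interval argument is arguably more self-contained, while your counting-function argument exposes more symmetry.

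One step is underspecified. For the direction $c_i \ge r_i+2 \Rightarrow \exists j: y_{j+1} \le x_j$, you note that $g(p) = p - B(p) < i$ at both $p = r_i$ and $p = r_i+1$, and then assert this ``produces an index $j$.'' That jump needs to be spelled out: merely knowing $B(p) > A(p)$ at two consecutive points does not by itself violate the interlacing (it only violates $y_j \le x_j$ at no $j$). What actually works is to set $j = A(r_i)+1$: then $g(r_i+1) < i$ gives $B(r_i+1) \ge j+1$ hence $y_{j+1} \le r_i+1$, while $A(r_i) = j-1$ gives $x_j > r_i$, i.e.\ $x_j \ge r_i+1 \ge y_{j+1}$, and one must also confirm $j \le k-1$ from $B(r_i+1) \le k$. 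Similarly, ``the reverse implication is analogous'' should at minimum name the witness index (here $q = x_j - j$, with $r_q \le x_j - 1$ and $c_q \ge x_j+1$). These gaps are fillable, but as written the bookkeeping you flag as the main obstacle is indeed where the argument is currently incomplete.
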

\begin{proof}
Write $\COMP{\row} = \left\{ \csrow_1<\csrow_2<\ldots<\csrow_{N-k} \right\}$ and $\COMP{\col} = \left\{ \cscol_1<\cscol_2<\ldots< \cscol_{N-k} \right\}.$
Using Lemma~\ref{lem:bidiag_det} we observe that
\begin{equation}
\label{eq:detprod}
  \det(U[\COMP{\row};\COMP{\col}]) = \prod_{i=1}^{N-k} U_{\csrow_i, \cscol_i}.
\end{equation}

Observe that by the bi-diagonal structure of $U$, this product
nullifies unless \begin{equation}
  \label{eq:rowcol_condition}
  \cscol_r \in \{\csrow_r, \csrow_{r}+1\}\cap \COMP{\row}.
\end{equation}
For $i\in[k+1]$ write $U_i=\{\srow_{i-1}+1,\srow_{i-1}+2,\dots,\srow_{i}\}$ setting $\srow_{0}=0$ and $\srow_{k+1}=N$.
Since $\srow_i\in\row$ we have by \eqref{eq:rowcol_condition} that unless
$\cscol_r\in U_i$ for all $r$ satisfying $\csrow_r\in U_i$, the product \eqref{eq:detprod} nullifies.
Since for all $i\in[k]$ we have $|\{r\ :\ \csrow_r\in U_i\}|=|U_i|-1$ and $|\{r\ :\ \csrow_r\in U_{k+1}\}|=|U_{k+1}|$, we conclude that each $U_i$ with $1 \le i\le k$ contains exactly one element of $\col$, and the $k+1$
block contains none of them.
This implies that unless
\[
  \scol_1 \leq \srow_1 \lneq \scol_2 \leq \srow_2 \lneq \cdots \lneq \scol_k \leq \srow_k,
\]
the product \eqref{eq:detprod} nullifies.
The stated formula for the determinant now follows by noting that for a given block $U_j$
\[
  \prod_{i : \srow_i \in U_j}  U_{\srow_i, \scol_i} = z^{\srow_j - \scol_j}.
\]
\end{proof}

\subsection*{Gaussian estimates}
\label{sec:Gdet}
In this section, we present several lemmata involving
estimates on the determinant of a Gaussian matrix. All the bounds in this
section are based on the following identity in law for an $N \times N$ matrix
$E$ of independent standard Gaussians, see \cite{Goodman},
\[
  |\det E|^2 \overset{\mathcal{L}}{=} \prod_{r=1}^N \chi_r^2,
\]
where $\chi_r$ are independent and have the distribution of the length of an $r$-dimensional standard Gaussian vector.
For $t > -\tfrac r 2,$ we have the following moment formula for the $\chi_r$ variable:
\begin{equation}
  \label{eq:chi_moment}
  \Exp \chi_r^{2t} = 2^{t} \frac{\Gamma( \tfrac{r}{2} + t)}{\Gamma( \tfrac{r}{2})}.
\end{equation}

\begin{lemma}
  \label{lem:Gdet_ub}
  Let $E$ be an $k \times k$ matrix of independent standard Gaussians.  For any $\delta >0,$ the following 
  holds,
  \[
    \Pr[
      \det|E| \geq (k!)^{\half + \delta}
    ] \leq \exp\left(-\lfloor k\delta\rfloor^2(\log(\nicefrac{k}{2e^2})\right)
  \]
\end{lemma}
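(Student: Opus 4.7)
The plan is to apply Markov's inequality to $|\det E|^{2t}$ with the integer choice $t = \lfloor k\delta\rfloor$; the case $t = 0$ is trivial, so assume $t \ge 1$. By the Goodman identity $|\det E|^2 \stackrel{\mathcal L}{=} \prod_{r=1}^k \chi_r^2$ and the moment formula \eqref{eq:chi_moment}, for integer $t$ one has
\[
\Exp\chi_r^{2t} = 2^t \frac{\Gamma(r/2+t)}{\Gamma(r/2)} = \prod_{j=0}^{t-1}(r+2j),
\]
so that
\[
\Exp|\det E|^{2t}
 = \prod_{r=1}^k \prod_{j=0}^{t-1}(r+2j)
 = \prod_{j=0}^{t-1}\frac{(k+2j)!}{(2j)!}
 = (k!)^t \prod_{j=0}^{t-1}\binom{k+2j}{2j}.
\]
Markov's inequality applied at level $(k!)^{(1+2\delta)t}$ then yields
\[
\Pr[|\det E| \ge (k!)^{1/2+\delta}]
 \le (k!)^{-(1+2\delta)t}\Exp|\det E|^{2t}
 = (k!)^{-2\delta t}\prod_{j=0}^{t-1}\binom{k+2j}{2j}.
\]

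To reduce this to the claimed form I would bound each binomial by $\binom{k+2j}{2j} \le (k+2t)^{2j}/(2j)!$, note that $k+2t \le k(1+2\delta)$, and use the Stirling lower bound $k! \ge \sqrt{2\pi k}(k/e)^k$ to get $(k!)^{-2\delta t} \le (e/k)^{2\delta k t}$ modulo polynomial factors in $k$. Since $t \le k\delta$, we have $2\delta t \ge 2t^2/k$, which after substituting and collecting powers of $k$ produces a bound of the form $(C/k)^{t^2}$ for a numerical constant $C$. A careful tracking of the multiplicative constants arising in Stirling's formula and in the product $\prod_{j=0}^{t-1}\binom{k+2j}{2j}$ should identify $C$ as $2e^2$, yielding $\exp(-t^2 \log(k/2e^2))$ as required.

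The main obstacle is computational rather than conceptual: the overall exponent of order $-t^2\log k$ is immediate from the Markov-with-integer-moments strategy, but pinning down the precise constant $2e^2$ requires balancing the constants from Stirling against those arising in the binomial product. The choice $t = \lfloor k\delta\rfloor$ is essentially forced by the tradeoff between the Markov discount $(k!)^{2\delta t}$ and the growth of the $2t$-th moment; a smaller $t$ wastes concentration while a larger $t$ makes the moment blow up. Any sub-polynomial slack left over from Stirling (the $\sqrt{2\pi k}$ factor) can be absorbed harmlessly by replacing $t$ with $\lfloor k\delta\rfloor$ on the final exponent.
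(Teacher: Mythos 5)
Your proposal follows essentially the same route as the paper: Goodman's identity $|\det E|^2 \stackrel{\mathcal L}{=}\prod_{r=1}^k\chi_r^2$, the $2t$-th integer moment $\Exp|\det E|^{2t}=\prod_{r=1}^k\prod_{j=0}^{t-1}(r+2j)$, and Markov at $t=\lfloor k\delta\rfloor$; the only cosmetic difference is that you package the factorial product as $(k!)^t\prod_j\binom{k+2j}{2j}$ and then Stirling, whereas the paper bounds directly $\Exp|\det E|^{2t}\le(k!)^t(2k)^{t^2}$, but both yield $(e/k)^{2t^2}(2k)^{t^2}=(2e^2/k)^{t^2}$ and hence the stated bound. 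The constant-tracking you defer does in fact close out cleanly along the lines you sketch (using $k+2t\le 2k$, i.e.\ $\delta\le\nicefrac12$, and $k!\ge(k/e)^k$), so there is no gap.
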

\begin{proof}
  For natural numbers $r$ and $t,$ the $\chi_r$ moment formula~\eqref{eq:chi_moment} simplifies to
  \[
    \Exp \chi_r^{2t} = r(r+2)(r+4)\cdots (r+2t-2).
  \]
  Thus for some natural $t,$ the moment of the determinant can be given by
  \[
    \Exp |\det E|^{2t} = \prod_{r=1}^k \prod_{i=0}^{t-1} (r+2i)
    \leq k! (k+2)! (k+4)! \cdots (k + 2t - 2)!.
  \]
  For $t \leq N,$ this can be bounded by
  \[
    \Exp |\det E|^{2t} \leq (k!)^t (2k)^{t^2}.
  \]
  By Markov's inequality, we therefore have that
  \[
    \Pr[  \det|E| \geq (k!)^{\half + \delta} ]
    \leq \frac{(k!)^{t} (2k)^{t^2}}{(k!)^{t + 2\delta}}
  \]
  for any integer $t \leq k.$  Using that $k! \geq k^ke^{-k},$ we get that
\begin{align*}
    \Pr[  \det|E| \geq (k!)^{\half + \delta} ]
    &\leq
    \exp\left( t^2 \log (2k) - 2t\delta k \log (\nicefrac{k}{e})\right)\\
    &= \exp\left( t^2 \log (2k) - t\delta k \log (\nicefrac{k^2}{e^2})\right)
\end{align*}
  for any integer $t \leq k.$  Taking $t = \lfloor k\delta \rfloor,$ we get
\begin{align*}
\Pr[\det|E| \geq (k!)^{\half + \delta}] &\leq
    \exp\left( \lfloor k\delta \rfloor^2 \log (2k) - \lfloor k\delta \rfloor\delta k \log (\nicefrac{k^2}{e^2})\right) \\
    &\le \exp\left( \lfloor k\delta \rfloor^2 \log (\nicefrac{2e^2}{k})\right).
\end{align*}

\end{proof}

\begin{lemma}
  \label{lem:Gdet_lb}
  Let $E$ be an $N\times N$ matrix of independent standard Gaussians.  Then there are constants $c_1 >0$ and $c_2>0$ so that
  \[
    \Pr[
      \det|E| \leq \sqrt{N!} e^{-c_1 N}
    ] \leq \frac{1}{c_2}e^{-c_2N}.
  \]
\end{lemma}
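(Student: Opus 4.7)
The plan is to bound the lower tail of $|\det E|^2$ using the distributional identity
\[
|\det E|^2 \overset{\mathcal{L}}{=} \prod_{r=1}^N \chi_r^2
\]
quoted just before Lemma~\ref{lem:Gdet_ub}, together with Markov's inequality applied to a suitable \emph{negative} moment of this product. Since $\chi_1^{-2s}$ is integrable only for $s<\half$, I would fix $s\in(0,\half)$, say $s=\nicefrac{1}{4}$, and write
\[
\Pr\!\left[\prod_{r=1}^N \chi_r^2\le \alpha\right]=\Pr\!\left[\prod_{r=1}^N \chi_r^{-2s}\ge \alpha^{-s}\right]\le \alpha^s\prod_{r=1}^N \Exp \chi_r^{-2s}.
\]

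The moment formula~\eqref{eq:chi_moment}, now applied with $t=-s$, is valid for every $r\ge 1$ since $-s>-\nicefrac{r}{2}$, and gives
\[
\Exp \chi_r^{-2s}=2^{-s}\,\frac{\Gamma(r/2-s)}{\Gamma(r/2)}.
\]
The standard Stirling asymptotic $\Gamma(x+a)/\Gamma(x)=x^a(1+O(1/x))$ for fixed $a$ yields, after multiplying over $r=1,\dots,N$ and using $\prod_{r=1}^N (r/2)^{-s}=(N!)^{-s}2^{Ns}$, a bound of the form
\[
\prod_{r=1}^N \Exp \chi_r^{-2s}\le C\,N^{c}\,(N!)^{-s}
\]
for constants $C=C(s),\,c=c(s)<\infty$; the values of the Gamma-ratio at the small $r$'s contribute only a multiplicative constant absorbed into $C$, while the product $\prod(1+O(1/r))$ contributes the polynomial factor $N^c$.

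Setting $\alpha = N!\,e^{-2c_1 N}$ and translating the bound back to $\det|E|$ yields
\[
\Pr\!\left[\,\det|E|\le \sqrt{N!}\,e^{-c_1 N}\,\right]\le C\,N^{c}\,e^{-2 c_1 s N}.
\]
Choosing $c_1$ sufficiently large relative to $s=\nicefrac{1}{4}$ and $c,C$, the polynomial factor is absorbed by the exponential and one obtains the claimed bound $\nicefrac{1}{c_2}\,e^{-c_2 N}$. The only mildly non-routine step is the uniform control of the Gamma-ratios via Stirling; no step poses a real obstacle, and the argument is in fact a direct counterpart to the upper-tail argument of Lemma~\ref{lem:Gdet_ub} with positive moments replaced by negative ones.
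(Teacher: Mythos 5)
Your argument is correct and is a recognizable variant of the paper's: both bound $\Pr[\,|\det E|\ \text{small}\,]$ by applying Markov's inequality to a negative moment of $\prod_r\chi_r$. The difference is in the choice of exponent and the consequences this has for small $r$. The paper uses the integer negative moment $\Exp\chi_r^{-2}=(r-2)^{-1}$, which is finite only for $r\ge 3$; to handle the first few factors it peels off $F_K=\prod_{r\le K}\chi_r$ for a fixed $K$, argues separately that $\Pr[F_K<2^{-N}]$ is exponentially small, and applies Markov only to the tail product $\prod_{r>K}\chi_r$, where the explicit formula $\Exp\chi_r^{-2}\le 2/r$ yields $\Exp L_K^{-2}\le K!\,2^N/N!$ with no asymptotic analysis needed. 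Your choice $s=\nicefrac{1}{4}<\nicefrac{1}{2}$ makes $\Exp\chi_r^{-2s}$ finite for every $r\ge 1$, so no splitting is required; the price is that you must control the Gamma-ratio $\Gamma(r/2-s)/\Gamma(r/2)$ via Stirling (or log-convexity of $\Gamma$) rather than read off a clean rational expression. Both routes yield the same exponential bound; yours is marginally more streamlined since it avoids the case split and the separate bounded-density estimate for $F_K$, while the paper's avoids any appeal to Gamma asymptotics.
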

\begin{proof}
  We will use negative moments and Markov's inequality to get the desired bound.  Fix an integer $K>0.$  Note that
  \[
    F_K \Def \prod_{r=1}^k \chi_r
  \]
  is absolutely continuous and has a bounded density.  Thus, the probability that $F_K < 2^{-N}$ is exponentially small in $N.$

  To prove the statement of the lemma, it therefore suffices consider
  \(
  L_{K} \Def \prod_{r=K+1}^N \chi_r.
  \)
  For this variable, we need to show that there are constants $c_i>0$ so that for $N > K,$
  \[
    \Pr[
      L_K \leq \sqrt{N!} e^{-c_1 N}
    ] \leq \frac{1}{c_2}e^{-c_2N}.
  \]

  Now by~\eqref{eq:chi_moment}
  \[
    \Exp \chi_r^{-2} = \frac{1}{(r-2)}.
  \]
  Thus for some $K$ sufficiently large, we get that for all $r > K,$
  \[
    \Exp \chi_r^{-2} \leq \frac{2}{r}.
  \]
  Hence for this $K,$
  \[
    \Exp L_K^{-2} \leq K! \frac{2^{N}}{N!}.
  \]
  Applying Markov's inequality
  we get
  \[
    \Pr[
      L_K \leq \sqrt{N!} 2^{-N}
    ]
    =
    \Pr[
      L_K^{-2} \geq \frac{4^{N}}{N!}
    ]
    \leq K!{2^{-N}},
  \]
  completing the proof.
\end{proof}

\section{Upper bound}
\label{sec:ub}
This section is dedicated to the proof of the following proposition.
\begin{proposition}
  \label{prop:ub}
  For $\ell=o(N)$ and any $z \in \C,$ we have that for all $\delta > 0,$
  \[
    \Pr[
      \LP(z) \leq \LPL(z) - \delta
    ] \to 0.
  \]
\end{proposition}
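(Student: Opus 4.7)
My plan is to lower-bound $|\det(\model-zI)|$ through a second-moment computation applied to the expansion of the determinant in minors of the noise $N^{-\gamma}G^N$. Starting from \eqref{eq:det_decomposition},
\[
\det(\model - zI) = \sum_{\substack{X,Y \subseteq [N] \\ |X|=|Y|}} \varepsilon_{X,Y}\,\det(A[\COMP{X};\COMP{Y}])\cdot N^{-\gamma|X|}\det(G^N[X;Y]),
\]
where $A=M^N-zI$ and $\varepsilon_{X,Y}\in\{-1,+1\}$, I would invoke Lemmas~\ref{lem:unbalanced_minors}, \ref{lem:bidiag_det} and~\ref{lem:block_calc}: the coefficient vanishes unless $|X_i|=|Y_i|$ for every block $i$, and otherwise factorizes blockwise as $\prod_i (c_i-z)^{a_i\log N-k_i-s_i}$ (up to sign), where $k_i=|X_i|$ and $s_i\ge 0$ is the interleaving shift of $Y_i$ relative to $X_i$.

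Next, a direct inspection of the Leibniz expansion shows that, for i.i.d.\ centered real Gaussian entries,
\[
\Exp\big[\det(G^N[X;Y])\,\det(G^N[X';Y'])\big] \;=\; |X|!\cdot\one[X=X',\,Y=Y'],
\]
so all cross terms in $|\det|^2$ have zero expectation and
\[
\Exp\big[|\det(\model-zI)|^2\big] \;=\; \sum_{\substack{X,Y \\ |X|=|Y|}} |\det(A[\COMP X;\COMP Y])|^2\cdot N^{-2\gamma|X|}\cdot |X|!.
\]
Restricting to $|X_i|=|Y_i|=k_i$ for every $i$ and organizing blockwise, I would perform a leading-order (saddle) analysis: for a \emph{far} block with $|c_i-z|>r_i$ the dominant choice is $k_i=0$, giving contribution $|c_i-z|^{2a_i\log N}$, while for a \emph{close} block with $|c_i-z|\le r_i$ the optimal $k_i$ balances the noise cost $N^{-2\gamma k_i}$ against $|X|!$ and the interleaving count, producing contribution $\asymp N^{-2\nu}$. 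Multiplying per-block optima (and accounting for the global factorial via Stirling) should yield $\Exp[|\det(\model-zI)|^2]=\exp\!\big(2N\LPL(z)+o(N)\big)$, matching the target directly at the level of the second moment.

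The final step is to promote this sharp second-moment identity to a high-probability lower bound on $|\det(\model-zI)|$ itself. I would aim for a matching fourth-moment bound
\[
\Exp\big[|\det(\model-zI)|^4\big] \;=\; (1+o(1))\,\big(\Exp[|\det(\model-zI)|^2]\big)^2,
\]
after which Chebyshev forces $|\det|^2=(1+o(1))\Exp[|\det|^2]$ in probability and delivers the desired $\LP(z)\ge\LPL(z)-o(1)$. Expanding $|\det|^4$ produces a quadruple sum over $(X,Y,X',Y')$ whose four Gaussian factors contract by Wick pairings; the ``diagonal'' pairings reproduce $(\Exp|\det|^2)^2$, and the main obstacle is to show that the ``anomalous'' pairings coupling distinct $(X,Y)$ choices blockwise yield only sub-leading contributions. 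A shortcut via Gaussian hypercontractivity is foreclosed by the fact that $\det(\model-zI)$ is a polynomial of degree $N$ in the noise entries, which would incur a factor $C^N$ that is prohibitive when $\gamma$ is close to $\half$, so the combinatorial control of the Wick expansion appears unavoidable.
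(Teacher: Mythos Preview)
There is a sign misprint in the displayed event of this proposition: the paper's own proof in Section~\ref{sec:ub} establishes the \emph{upper} bound $\LP(z)\le\LPL(z)+o(1)$ with high probability (so the event should read $\LP(z)\ge\LPL(z)+\delta$), while the lower bound $\LP(z)\ge\LPL(z)-\delta$ is the content of Proposition~\ref{prop:lb}. You have read the inequality literally and are attempting the lower bound; consequently your proposal bears no resemblance to the paper's argument here, which is a first-moment upper bound: expand $\det(\UBM)$ as in~\eqref{eq:ubm_premaster}, apply the triangle inequality, control all noise minors simultaneously by a union bound (Lemma~\ref{lem:ub_noise}), bound the deterministic minors by $\UBw(z)$, and optimize over $\UBk\in\{0,1\}^\ell$.

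Viewed instead as an attempt at the lower bound, your scheme has a genuine gap. You reduce everything to the claim $\Exp[|\det|^4]=(1+o(1))\big(\Exp[|\det|^2]\big)^2$, acknowledge that the ``anomalous'' Wick pairings are the ``main obstacle,'' and stop. This is not a detail: the paper's discussion (end of Section~1) records that already for a single block $I+zT^N+N^{-\gamma}G^N$ with $\tfrac12<\gamma\le1$ and $|z|$ close to $1$, $\Var(\det)$ can blow up while $\Exp[\det]=1$; the same mechanism obstructs any naive moment concentration of $|\det|^2$ on the full matrix for $z\notin\GOODz$. Since the proposition is stated for \emph{every} $z\in\C$ with no restriction, your route cannot close for such $z$. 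The paper sidesteps this by proving the lower bound only for $z\in\GOODz$ (Proposition~\ref{prop:lb}) and by \emph{not} running moments on the whole determinant: it permutes rows and columns to isolate a $Q\times Q$ pure-noise corner (the close blocks), applies the Schur complement, uses a second-moment bound on the \emph{stable} $(N-Q)\times(N-Q)$ part only (Lemma~\ref{lem:LB_var}, where the $\GOODz$ hypothesis enters through Lemma~\ref{lem:minor_sum}), and handles the noise corner by stochastic domination (Lemma~\ref{lem:sd}) plus Lemma~\ref{lem:Gdet_lb}. That decomposition is exactly what lets one avoid the fourth-moment combinatorics you flag as unresolved.
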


Let $\Mcol$ denote the collection of blocks so that $|z-c_i| \geq 1.$ Define
$\UBM$ to be a modification of the matrix $\model-zI$ in which each column
intersecting a block from $\Mcol$ is scaled by $|z-c_i|^{-1}.$ This implies
the following relationship:
\begin{equation}
	|\det(\model - zI)| = |\det \UBM| \prod_{i \in \Mcol} |z-c_i|^{a_i \log N}
	\label{eq:ub0}
\end{equation}
Decompose $\UBM$ as a sum of $\UBMd + \UBMn,$ where $\UBMd = \Exp \UBM.$ This
gives $\UBMd$ the same block structure as $\model.$ 

Write $\UBk = (k_i)_{i \in [\ell]}$ for an element of the hypercube
$\{0,1\}^\ell.$ We define $\PI_{\UBk}$ as the subset of $\{(\row,\col),
|\row|=|\col|\}$ which satisfies:
\begin{enumerate}
  \item For each $i \in [\ell],$ $|\row_i| = |\col_i|.$
  \item For each $i \in [\ell],$ $|\row_i| > 0$ if and only if $k_i = 1.$
\end{enumerate}
Combining this together with Lemma~\ref{lem:unbalanced_minors} and
\eqref{eq:det_decomposition} we have
\begin{equation*}
  \det(\UBM) = \sum_{\UBk \in \{0,1\}^\ell} \sum_{(\row,\col) \in \PI_{\UBk}} (-1)^{s(\row,\col)} \det(\UBMd[\COMP{\row};\COMP{\col}])\det(\UBMn[\row;\col]).
\end{equation*}
By taking absolute value and applying the triangle inequality,
this implies
\begin{align}
\label{eq:ubm_premaster}
|\det \UBM|
\leq \sum_{\UBk \in \{0,1\}^\ell} \sum_{(\row,\col) \in \PI_{\UBk}}
|\det \UBMd[\COMP{\row} ; \COMP{\col}] \det \UBMn[\row ; \col]|.
\end{align}

Define the weight $\UBw(z)$ by
\begin{equation}\label{eq:ub_d}
\UBw(z) \Def \prod_{i \in [\ell]\setminus \Mcol} |z-c_i|^{(1-k_i) a_i \log N}.
\end{equation}
By Lemma~\ref{lem:bidiag_det}, for any $\row,\col \subseteq [N]$ the minor $|\det \UBMd[\COMP{\row};\COMP{\col}]|$ is given by the product of its diagonal entries.  All entries of $\UBMd$ are bounded by $1.$  For those $(\row,\col) \in \PI_{\UBk},$ the diagonal entries of blocks for which $k_i = 0$ can be bounded by $|z-c_i|.$  Hence, we get the bound
\[
  |\det \UBMd[\COMP{\row};\COMP{\col}]| \leq \UBw(z) \leq 1.
\]

To complement this bound, we control the magnitude of $\det \UBMn[\row ; \col]$ over all minors.
\begin{lemma}
\label{lem:ub_noise}
For any fixed $\delta > 0,$ there is a constant $C > 0$ so that, 
with high probability,
for all $\row,\col \subseteq [N]$ with $|\row|=|\col|=k$ we have
\[
|\det \UBMn[\row ; \col]| \leq C ({k!})^{\half+\nicefrac{\delta}{\log N}}N^{(-\nu -\half)k} e^{(\log N)^C}.
\]
\end{lemma}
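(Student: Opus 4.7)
The plan is to reduce the problem to bounding all minors of the standard Gaussian matrix $G^N$ by a union bound, split into two regimes by submatrix size.

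First, because $\UBMn = N^{-\gamma}G^N D$ for a diagonal matrix $D$ whose diagonal entries lie in $(0,1]$ (they are either $1$ or $|z-c_i|^{-1}\le 1$ on blocks in $\Mcol$), one obtains the deterministic pointwise inequality
\[
|\det \UBMn[\row;\col]| \le N^{-\gamma k}\,|\det G^N[\row;\col]|.
\]
This already factors out the required $N^{(-\nu-\half)k} = N^{-\gamma k}$, so it suffices to bound $|\det G^N[\row;\col]|$ uniformly over all $(\row,\col)$ with $|\row|=|\col|=k$. I will then work on the high-probability event $\Omega_0 = \{\max_{i,j}|G^N_{ij}| \le \sqrt{3\log N}\}$ and separate small and large minors at a threshold $K = (\log N)^{C_0}$, with $C_0$ to be chosen.

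For $k \le K$, Hadamard's inequality combined with the entry truncation on $\Omega_0$ gives the deterministic uniform bound $|\det G^N[\row;\col]| \le (3k\log N)^{k/2}$. Stirling's inequality $k^{k/2}\le e^{k/2}(k!)^{1/2}$ rewrites this as $(k!)^{1/2}(3e\log N)^{k/2}$, and the surplus factor $(3e\log N)^{k/2}$ is at most $\exp((\log N)^{C_0+1})$ throughout the small regime, so it is absorbed into $e^{(\log N)^C}$ once $C \ge C_0+1$. For $k > K$, I will invoke Lemma~\ref{lem:Gdet_ub} with $\delta_0 = \delta/\log N$ on each fixed minor, then union bound over the $\binom{N}{k}^2 \le e^{2k\log N}$ minors of size $k$. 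This yields a size-$k$ failure probability bounded by
\[
\exp\!\Bigl(2k\log N - \lfloor k\delta/\log N\rfloor^2 \log(k/(2e^2))\Bigr).
\]
Choosing $C_0$ sufficiently large (e.g.\ $C_0 = 4$) makes the Gaussian-tail exponent, which scales like $(k\delta)^2 \log k/(\log N)^2$, dominate $2k\log N$ by a margin summable in $k$, so summing the contributions over all $k > K$ and adding the $\Omega_0^c$ contribution gives $o(1)$.

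The main obstacle will be precisely this calibration: Lemma~\ref{lem:Gdet_ub} only delivers useful concentration once $k\delta/\log N \gg 1$, which forces the two-regime split at a polylogarithmic threshold and consequently necessitates the $e^{(\log N)^C}$ slack factor in the statement to swallow the crude Hadamard bound in the small-$k$ regime. Note that the proof uses the Gaussian assumption only inside Lemma~\ref{lem:Gdet_ub}; any noise law whose minor determinants admit a comparable tail estimate would give the same conclusion.
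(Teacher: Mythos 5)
Your proof is essentially the same as the paper's: factor out the deterministic column scaling and the $N^{-\gamma}$ prefactor, handle small minors deterministically on a boundedness event for the Gaussian entries, and handle large minors with Lemma~\ref{lem:Gdet_ub} at scale $\delta/\log N$ plus a union bound over the $\binom{N}{k}^2$ minors, splitting at a polylogarithmic threshold (the paper uses $(\log N)^3$ with the Leibniz bound $k!\,(\max|G_{ij}|)^k$, you use Hadamard and $(\log N)^4$; both work and absorb into the $e^{(\log N)^C}$ slack). One small slip: your event $\Omega_0=\{\max_{i,j}|G^N_{ij}|\le\sqrt{3\log N}\}$ is \emph{not} high probability, since $\Pr[|G_{ij}|>\sqrt{3\log N}]\approx N^{-3/2}$ and a union bound over $N^2$ entries gives roughly $N^{1/2}$; replacing $\sqrt{3\log N}$ by $\sqrt{5\log N}$ (or $\log N$, as in the paper) fixes this without changing anything downstream.
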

\begin{proof}
  We apply a union bound over all choices $(\row, \col)$ with
  $|\row| = |\col|.$  
  Let $\Omega_N$ denote the event that
  all elements of $\UBMn$ are at most $N^{-\nu-\half}\log N$ in modulus;
  because the entries are standard Gaussian, $\Pr(\Omega_N)\to 1$
  as $N\to\infty$.
  Thus, for choices of $(\row, \col)$ with $|\row | = k \leq (\log N)^3,$
  we have on $\Omega_N$ that
  \[
    |\det \UBMn[\row; \col]| \leq k! N^{(-\nu -\half) k} (\log N)^{k} \leq N^{(-\nu -\half)k} e^{ (\log N)^4}
  \]
  for all $N$ large.

  For $(\row, \col)$ with $|\row|=|\col| = k > (\log N)^3,$ we apply Lemma \ref{lem:Gdet_ub} to get that \[
    \Pr[ |\det \UBMn[\row; \col]| \geq N^{(-\nu -\half)k}(k!)^{\half + \nicefrac{\delta}{\log N}}
  ] \leq e^{-\lfloor \nicefrac{k\delta}{\log N} \rfloor^2 \log(\nicefrac{k}{2e^2})}.
  \]
  Summing over all choices of $(\row, \col),$  we have that
  \begin{align*}
    \Pr[
      \exists (\row,\col), |\row| > (\log N)^3 :
|\det \UBMn[\row; \col]| \geq N^{(-\nu - \half) |\row|}(|\row|!)^{\half + \nicefrac{\delta}{\log N}}
    ] \hspace{-3in}&\hspace{3in}\\
    &\leq \sum_{k= \lceil (\log N)^3 \rceil}^N \binom{N}{k}^2 e^{-\lfloor \nicefrac{k\delta}{\log N} \rfloor^2\log(\nicefrac{k}{2e^2})} \\
    &\leq \sum_{k= \lceil (\log N)^3 \rceil}^N N^{2k} e^{-ck^{{4}/{3}}\delta^2\log(k)} \leq e^{-O( (\log N)^4)},
  \end{align*}
  completing the proof.
\end{proof}

In light of Lemma~\ref{lem:ub_noise} and~\eqref{eq:ub_d}, it is possible to rewrite \eqref{eq:ubm_premaster} as
\begin{align}
  \nonumber
|\det \UBM|
&\leq C e^{ (\log N)^C} \sum_{\UBk \in \{0,1\}^\ell}
\UBw(z)
\sum_{(\row,\col)\in \PI_{\UBk}} ({|\row|!})^{\half+\nicefrac{\delta}{\log N}}N^{(-\nu -\half)|\row|} \\
\label{eq:ubm_master}
&\leq C e^{ (\log N)^C} \sum_{\UBk \in \{0,1\}^\ell}
\UBw(z)
\left(e^\delta N^{-\nu}\right)^{|\UBk\|_1}
\sum_{(\row,\col)\in \PI_{\UBk}}\left(e^\delta N^{-\nu}\right)^{(|\row|-\|\UBk\|_1)}.
\end{align}

Next, we apply the following estimate, whose proof we postpone to the end of
this section, to conclude the proof of Proposition~\ref{prop:ub}.
\begin{lemma}
  \label{lem:ub1}
  For any $\UBk \in \{0,1\}^\ell,$
  \[
    \sum_{(\row,\col)\in \PI_{\UBk}} \left(e^\delta N^{-\nu}\right)^{(|\row|-\|\UBk\|_1)}
    \leq e^{ 2e^{{\delta}/{2}} N^{1 - {\nu}/{2}}}.
  \]
\end{lemma}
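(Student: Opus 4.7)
The plan is to exploit the block-diagonal structure of $M^N$ to factor the sum over $\PI_{\UBk}$ into a product of blockwise contributions. Writing $y = e^{\delta}N^{-\nu}$ for brevity, membership in $\PI_{\UBk}$ is a conjunction of independent constraints on each block: within block $i$, one chooses $\row_i,\col_i\subseteq[n_i]$ with $|\row_i|=|\col_i|=j_i$, and $j_i\ge 1$ precisely when $k_i=1$. Since $|\row|-\|\UBk\|_1=\sum_{i:\, k_i=1}(j_i-1)$, the sum factors as
\[
\sum_{(\row,\col)\in\PI_{\UBk}} y^{|\row|-\|\UBk\|_1} \;=\; \prod_{i:\ k_i=1} \Phi_{n_i}(y),\qquad \Phi_n(y)\Def\sum_{j=1}^n\binom{n}{j}^{2}y^{j-1}.
\]

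The blockwise estimate rests on the elementary inequality
\[
\sum_{j=0}^{n}\binom{n}{j}^{2}y^{j}\;=\;\sum_{j=0}^{n}\Big(\binom{n}{j}y^{j/2}\Big)^{2}\;\le\;\Big(\sum_{j=0}^{n}\binom{n}{j}y^{j/2}\Big)^{2}\;=\;(1+\sqrt{y})^{2n},
\]
valid because the summands are non-negative, so squaring the sum dominates the sum of the squares. Using the shift identity $\binom{n}{j+1}=\tfrac{n}{j+1}\binom{n-1}{j}$ to re-index, one pulls out the leading factor $n^{2}$:
\[
\Phi_n(y)\;=\;n^{2}\sum_{j=0}^{n-1}\frac{\binom{n-1}{j}^{2}}{(j+1)^{2}}\,y^{j}\;\le\;n^{2}(1+\sqrt{y})^{2(n-1)}\;\le\;n^{2}\,e^{2(n-1)\sqrt{y}}.
\]
Multiplying over blocks $i$ with $k_i=1$ and using $\sum_{i:\, k_i=1}(n_i-1)\le N$ gives
\[
\sum_{(\row,\col)\in\PI_{\UBk}}y^{|\row|-\|\UBk\|_1}\;\le\;\Big(\prod_{i:\, k_i=1}n_i^{2}\Big)\,e^{2N\sqrt{y}}.
\]
Since $N\sqrt{y}=e^{\delta/2}N^{1-\nu/2}$, the leading exponential factor matches the target exponent exactly.

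The final step is to absorb the combinatorial prefactor $\prod_{i:\, k_i=1}n_i^{2}$ into the exponent. Using $n_i=a_i\log N$, the constraint $\sum_i a_i\log N = N$, and AM--GM applied under $\ell(N)=o(N)$, this prefactor contributes only a term that is of lower order than the main exponent $e^{\delta/2}N^{1-\nu/2}$ in the regime of interest, and is absorbed into the factor $e^{\delta/2}$ (versus $1$) in the constant multiplying $N^{1-\nu/2}$.

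The main obstacle is precisely this last absorption: when $\|\UBk\|_1$ is large while the corresponding blocks are individually small, $\prod n_i^{2}$ can carry an exponential factor of order $\|\UBk\|_1\log N$, so one must carefully use the joint constraint $\sum_{i:\, k_i=1}n_i\le N$ together with $\ell=o(N)$ and the concavity of $\log$ to verify that this remains of lower order than $N^{1-\nu/2}$.
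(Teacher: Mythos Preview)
Your blockwise factorization and the estimate $\Phi_n(y)\le n^{2}(1+\sqrt y)^{2(n-1)}\le n^{2}e^{2(n-1)\sqrt y}$ are correct, and this parallels the paper's strategy. The gap is in the absorption step. With $y=e^{\delta}N^{-\nu}$ you have $\sqrt y=e^{\delta/2}N^{-\nu/2}$, so $2N\sqrt y=2e^{\delta/2}N^{1-\nu/2}$ is already \emph{exactly} the exponent appearing on the right-hand side of the lemma. There is no residual slack ``between the constant $1$ and the constant $e^{\delta/2}$'': that factor was consumed the moment you took $\sqrt y$. Multiplying your blockwise bound over $i$ with $k_i=1$ therefore yields
\[
\Bigl(\prod_{i:\,k_i=1}n_i^{2}\Bigr)\,e^{2e^{\delta/2}N^{1-\nu/2}},
\]
and to reach the stated bound you would need $\prod_{i:\,k_i=1}n_i^{2}\le 1$, which is false whenever any block has size at least $2$. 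Your own worst-case diagnosis confirms that this cannot be repaired within the stated inequality: taking $\ell\sim N/\log N$ blocks each of size $\log N$ (still $\ell=o(N)$) gives $\log\prod n_i^{2}\sim 2N\log\log N/\log N$, which for any $\nu>0$ eventually dominates every fixed multiple of $N^{1-\nu/2}$. So the prefactor is genuinely not absorbable into the exponent as written.

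The paper's argument is organized so that no such prefactor ever appears. Rather than extracting $n^{2}$ from the $j=1$ term, it passes from $\Phi_n(y)=\sum_{j\ge 1}\binom{n}{j}^{2}y^{j-1}$ to $P_n(y)=\sum_{j\ge 0}\binom{n}{j}^{2}y^{j}$, and then bounds $P_n(y)\le e^{2n\sqrt y}$ via the Taylor comparison $\binom{n}{j}^{2}\le n^{2j}/(j!)^{2}\le (2n)^{2j}/(2j)!$. The product over blocks then gives $e^{2\sqrt y\sum_i a_i\log N}=e^{2e^{\delta/2}N^{1-\nu/2}}$ directly, with nothing left to absorb. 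If you want to salvage your approach, the place to intervene is the blockwise bound: you need an estimate of the form $\Phi_n(y)\le e^{2n\sqrt y}$ (or anything without a multiplicative $n^{2}$), not $\Phi_n(y)\le n^{2}e^{2n\sqrt y}$.
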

Applying Lemma~\ref{lem:ub1} to \eqref{eq:ubm_master}, we get
\[
|\det \UBM|
\leq  C e^{ (\log N)^C}e^{ 2e^{{\delta}/{2}} N^{1 - {\nu}/{2}}}
\sum_{\UBk \in \{0,1\}^\ell}
\UBw(z)
\left(e^\delta N^{-\nu}\right)^{\|\UBk\|_1}.
\]
By our assumption that $\ell = o(N)$ we may replace the sum by a maximum:
\begin{align}
  \frac{1}{N} \log |\det \UBM| &\le o(1)
  +\max_{\UBk \in \{0,1\}^{\ell}}\frac{\log\left(\UBw(z)\left(e^\delta N^{-\nu}\right)^{\|\UBk\|_1}\right)}{N} \nonumber \\
  &= o(1) + \frac{\log N}{N} \sum_{i \in [\ell]\setminus \Mcol}\max(a_i \log|z-c_i| ,- \nu +\nicefrac{\delta}{\log N}),
  \label{eq:detU_bound}
\end{align}
where the last equality follows from \eqref{eq:ub_d}. We observe that
\[
\max(a_i\log|z-c_i| ,- \nu +\nicefrac{\delta}{\log N})=\begin{cases}
  a_i\log|z-c_i|& \text{if } g_i \le -\nicefrac{\delta}{\log N} \text{ and }\\
- \nu +\nicefrac{\delta}{\log N}& \text{if } g_i > -\nicefrac{\delta}{\log N}.
\end{cases}
\]
Writing $J\Def\{i\in [\ell]\ :\ g_i \le -\nicefrac{\delta}{\log N}\}$,
Translating~\eqref{eq:detU_bound} into:
\begin{equation}
\label{eq:detU_bound2}
  \frac{1}{N} \log |\det \UBM| \le o(1) + \frac{\log{N}}{N}\hspace{-2pt}\sum_{i\in J\setminus \Mcol}\hspace{-2pt}a_i\log|z-c_i|
  + \frac{\log{N}}{N}\hspace{-12pt}\sum_{i\in [\ell]\setminus ( J\cup\Mcol)}\hspace{-8pt}\left(- \nu +\nicefrac{\delta}{\log N}\right).
\end{equation}
Thus using~\eqref{eq:ub0} and \eqref{eq:detU_bound2},  we have shown that with probability going to $1$,
for $\delta<\nu$, we have
\begin{align}
  \nonumber
  \frac{1}{N}\log |\det \model - zI| &= \frac{\log |\det \UBM|}{N}+\frac{\log{N}}{N}\sum_{i \in \Mcol}{a_i}
  \log|z-c_i| \nonumber\\
  &= o(1) + \frac{\log{N}}{N}\hspace{-2pt}\sum_{i\in J}\hspace{-2pt}a_i\log|z-c_i|
  + \frac{\log{N}}{N}\hspace{-2pt}\sum_{i\in [\ell] \setminus J}\hspace{-4pt}\left(-
  \nu +\nicefrac{\delta}{\log N}\right) \nonumber\\
  \label{eq:ub2}
  &\leq
  o(1) + \frac{\log{N}}{N}\hspace{-2pt}\sum_{i\in J}\hspace{-2pt}a_i\log|z-c_i|
  + \frac{\log{N}}{N}\hspace{-2pt}\sum_{i\in [\ell] \setminus J}\hspace{-4pt}-
  \nu.
\end{align}
Where the second equality uses the fact that $g_i<-\nu<-\delta$ for $i\in \Mcol$, and the inequality
uses the assumption that $\ell=o(N)$.

Rewriting \eqref{eq:target}, we have the following bound on $\LPL(z):$
  \[
    \LPL(z) = \frac{\log N}{N}\sum_{i : g_i < 0} a_i \log |z-c_i| - \frac{\log N}{N}\sum_{i : g_i \geq 0}\nu.
  \]
  The bound given in \eqref{eq:ub2} differs only in that some terms for which $-\nicefrac{\delta}{\log N} < g_i \leq 0$ have been moved from the second sum to the first.
  Thus \eqref{eq:ub2} can be rewritten as
  \[
    \LP(z) \leq \LPL(z)  + o(1) + \frac{\log N}{N}\sum_{\substack{i\in [\ell]\setminus J\\
    g_i\le0 }} -g_i.
  \]
  As $[\ell]=o(N)$ and $g_i>-\nicefrac{\delta}{\log N}$ for all $i\in[\ell]\setminus J$, we get
  \[
    \LP(z) \leq \LPL(z)  + o(1),
  \]
 as required.  \qed

  \begin{proof}[Proof of Lemma~\ref{lem:ub1}]
    Let $\UBm = (m_i)_{i \in [\ell]}$ denote an element of the set
    \[
      \UBconf \Def [a_1\log N] \times [a_2\log N] \times \cdots \times [a_\ell \log N],
    \]
    and let $\NS \subset \PI_{\UBk}$ denote the collection of pairs $(\row, \col)$ so that $m_i=|\row_i|=|\col_i|.$  Note that this forces $m_i = 0$ for all those $i \in [\ell]$ so that $k_i = 0.$

    The cardinality of $\NS$ is given by
    \[
      |\NS|  = \prod_{i : k_i = 1} \binom{a_i \log N}{m_i}^2.
    \]
        We may then use this to obtain the bound,
    \begin{align}
      \nonumber
      \hspace{18 pt}\sum_{(\row,\col)\in \PI_{\UBk}} \left(e^\delta N^{-\nu}\right)^{(|I|-\|\UBk\|_1)}
      &=
      \sum_{j = \|\UBk\|_1}^{N}
      \sum_{\substack{ \UBm \in \UBconf \\ \|\UBm\|_1 = j }}
      |\NS|\left(e^\delta N^{-\nu}\right)^{(j-\|\UBk\|_1)} \\
      \nonumber
      &=
      \sum_{j = \|\UBk\|_1}^{N}
      \sum_{\substack{ \UBm \in \UBconf \\ \|\UBm\|_1 = j }}
      \prod_{i : k_i = 1}
      \binom{a_i \log N}{m_i}^2
      \left(e^\delta N^{-\nu}\right)^{(m_i - 1)} \\
      \nonumber
      &=
      \prod_{i : k_i = 1}
      \biggl[
	\sum_{m_i = 1}^{a_i\log N}
	\binom{a_i \log N}{m_i}^2
	\left(e^\delta N^{-\nu}\right)^{(m_i - 1)}
      \biggr]\\
      \nonumber
      &\leq
      \prod_{i : k_i = 1}
      \biggl[
	\sum_{m_i = 0}^{a_i\log N}
	\binom{a_i \log N}{m_i}^2
	\left(e^\delta N^{-\nu}\right)^{m_i}
      \biggr].\\
    \intertext{As $\sum_{m_i = 0}^{a_i\log N}
	\binom{a_i \log N}{m_i}^2
	\left(e^\delta N^{-\nu}\right)^{m_i}>1$ for all $i\in [\ell]$ we can complete the product to get:}
      \hspace{20pt}\sum_{(\row,\col)\in \PI_{\UBk}} \left(e^\delta N^{-\nu}\right)^{(|I|-\|\UBk\|_1)}
      &\leq
      \prod_{i = 1}^\ell
      \biggl[
	\sum_{m_i = 0}^{a_i\log N}
	\binom{a_i \log N}{m_i}^2
	\left(e^\delta N^{-\nu}\right)^{m_i}
      \biggr].  \label{eq:ub3} \raisetag{-25pt}
    \end{align}
    For any $q > 0$ and any $t \in \N,$ define the polynomial
    \[
      P_t(q) = \sum_{m=0}^t \binom{t}{m}^2 q^m.
    \]

    Thus, in terms of~\eqref{eq:ub3}, we have
    \begin{equation}
      \label{eq:ub4}
      \sum_{(I,J)\in \PI_{\UBk}} \left(e^\delta N^{-\nu}\right)^{(|I|-\|\UBk\|_1)}
      \leq \prod_{i=1}^\ell
      P_{a_i\log N}\left(e^\delta N^{-\nu}\right).
    \end{equation}

    We now bound $P_t(q),$ by comparing with Taylor series:
    \begin{align*}
      P_t(q)
      &=
      \sum_{m=0}^t \binom{t}{m}^2 q^m
      \leq
      \sum_{m=0}^t \frac{t^{2m}}{(m!)^2}q^m
      \leq
      \sum_{m=0}^t \frac{t^{2m}}{(2m)!}q^m \binom{2m}{m} \\
      &\leq
      \sum_{m=0}^\infty \frac{t^{2m}}{(2m)!}q^m 2^{2m}
      \leq
      \sum_{m=0}^\infty \frac{(2t\sqrt{q})^m}{m!} = e^{2t\sqrt{q}}.
    \end{align*}

    Combining this bound with~\eqref{eq:ub4}, we get that
    \[
      \sum_{(I,J)\in \PI_{\UBk}} \left(e^\delta N^{-\nu}\right)^{(|I|-\|\UBk\|_1)}
      \leq \exp\left(2 \left(e^\delta N^{-\nu}\right)^{\half} \sum_{i=1}^\ell a_i \log N\right).
    \]
    As $\sum_{i=1}^\ell a_i \log N = N,$ the proof is complete.

  \end{proof}
\section{Lower bound}
\label{sec:lb}

This section is dedicated to the proof of the following proposition.
\begin{proposition}
  \label{prop:lb}
  For $z \in \GOODz, \ell=o(N)$ we have that for all $\delta > 0,$
  \[
    \lim_{N\to\infty}\Pr[
      \LP(z) \leq \LPL(z) - \delta
    ] = 0.
  \]
\end{proposition}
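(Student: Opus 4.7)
The plan is to establish $\LP(z) \geq \LPL(z) - \delta$ with high probability by carrying out the Schur-complement decomposition sketched in the introduction. For each Jordan block $B^i$ of dimension $d_i = a_i \log N$, I will choose an integer $q_i$ defining a partition of its rows and columns into a good set of size $q_i$ and a bad set of size $d_i - q_i$: outside blocks ($g_i \leq 0$) take $q_i = d_i$, while for inside blocks ($g_i > 0$) one chooses $q_i$ so that the determinant of the good--good sub-block of $B^i - zI$ is of order $N^{-\nu}$, matching the per-block contribution to $\LPL(z)$. The partitions within each block are arranged so that the bad rows and bad columns are disjoint from each other and from the corresponding superdiagonal positions, which by direct inspection of the bidiagonal structure forces the bad--bad portion of $M^N - zI$ to be identically zero. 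Applying the associated row permutation $P$ and column permutation $Q$ yields
\[
  P(\model - zI)Q = \begin{bmatrix} A + G_1 & U \\ V & G_2 \end{bmatrix},
\]
where $A$ is the deterministic part coming from the good--good entries, $G_1$, $G_2$, $U$ are pure noise blocks of $N^{-\gamma} G^N$, and $V$ is noise plus a sparse set of deterministic $1$'s inherited from the Jordan superdiagonals.

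Using the Schur complement,
\[
  \det(\model - zI) = \pm\det(A + G_1)\det\bigl(G_2 - V(A + G_1)^{-1} U \bigr),
\]
I lower bound each factor separately. For the first factor, since $\Exp \det(A+G_1) = \det A$, I will control $\Var \det(A+G_1)$ by a second-moment expansion via~\eqref{eq:det_decomposition}, which writes $\Exp|\det(A+G_1)|^2$ as a sum of products of complementary minors of $A$ (computed via Lemmata~\ref{lem:unbalanced_minors} and~\ref{lem:bidiag_det}) weighted by second moments of Gaussian minors of $G_1$. Assumption~\ref{ass:small_area}, i.e.\ the condition $z \in \GOODz$, guarantees that for each block either $a_i$ or $|1 - |z-c_i|^2|^{-1}$ is not too large, which is precisely what is needed to keep this variance comparable to $|\det A|^2$. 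Chebyshev's inequality then gives $|\det(A+G_1)| \geq |\det A|\, e^{-\delta N}$ with high probability; a direct combinatorial computation using Lemma~\ref{lem:bidiag_det} will then show $\log|\det A| \geq N \LPL(z) - o(N)$ under the chosen $q_i$.

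For the second factor, condition on $G_1, U, V$ so that $C \Def V(A+G_1)^{-1}U$ becomes deterministic, and invoke a Gaussian stochastic-domination result: for any deterministic $C$, $|\det(G_2 - C)|$ stochastically dominates $|\det G_2|$, a fact that ultimately rests on the symmetry $G_2 \stackrel{d}{=} -G_2$ together with the unimodality of the Gaussian density at zero. Lemma~\ref{lem:Gdet_lb} then provides the pure-noise lower bound $|\det G_2| \geq \sqrt{m!}\, N^{-\gamma m} e^{-c_1 m}$ with high probability, where $m = \sum_i (d_i - q_i)$ is the size of the bad block. The choice of $q_i$ in Step 1 ensures that $m$ is small enough (in particular $\gamma m \log N - \tfrac{1}{2} m \log m = o(N)$) that this factor contributes only $e^{-o(N)}$, so that combining the two lower bounds recovers $\log |\det(\model - zI)| \geq N\LPL(z) - o(N)$.

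The main obstacle is the variance control in the first factor. When $\gamma \in (\half, 1]$ and $z$ lies near a point where some large block is just barely inside its critical radius---so that both $a_i$ and $|1 - |z - c_i|^2|^{-1}$ are large---the variance of $\det(A + G_1)$ can grow faster than $|\det A|^2$, and the second-moment method fails. Assumption~\ref{ass:small_area} is designed precisely to exclude such $z$, and the careful combinatorial bookkeeping of the variance contributions block by block, using the closeness of $|z-c_i|$ to the critical radius $r_i$ in each block, is the technical heart of the proof. A secondary delicate point is the Gaussian stochastic-domination step, which is the principal reason the present argument does not immediately extend to non-Gaussian noise.
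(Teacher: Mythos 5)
Your high-level plan (Schur complement, second moment control of $\det(A+G_1)$, stochastic domination of $\det(G_2-C)$ by $\det G_2$, then Lemma~\ref{lem:Gdet_lb}) matches the paper, but there is a genuine quantitative gap in how you allocate the $-\nu$ per block between $\det A$ and the noise block, and this gap is fatal in a natural regime.

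In the paper's construction, exactly \emph{one} row and one column are peeled off from each inside block, so $G_2$ has size $Q\le\ell=o(N)$, and after peeling and reindexing, the corresponding block of $A$ has determinant exactly $1$. Consequently $\log|\det A|=\log N\sum_{i>Q}a_i\log|z-c_i|$ contains \emph{only} the outside blocks' contributions. The $-\nu$ per inside block in $\LPL(z)$ is then recovered from the noise determinant itself: Lemma~\ref{lem:Gdet_lb} and Stirling give
\[
\tfrac1N\log\bigl|\det(N^{-\gamma}G_Q-Z)\bigr| \ge -\nu\,\tfrac{Q\log N}{N}+\tfrac{Q(\log Q-\log N)}{2N}-O(Q/N),
\]
and since $Q=o(N)$ the correction term $\tfrac{Q(\log Q-\log N)}{2N}$ is $o(1)$. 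The matching of the $-(\nu+\half)Q\log N$ factor against the $\tfrac12 Q\log Q$ Stirling factor is the whole point.

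Your proposal instead builds the $-\nu$ per inside block into $\log|\det A|$ (by choosing $q_i$ so that the good--good subdeterminant is $\approx N^{-\nu}$) and then asks the noise block $G_2$, of size $m=\sum_i(d_i-q_i)$, to contribute only $o(1)$. But Lemma~\ref{lem:Gdet_lb} gives, for the noise block,
\[
\tfrac1N\log\bigl|\det(N^{-\gamma}G_m - C)\bigr| \ge -\gamma\,\tfrac{m\log N}{N}+\tfrac{m\log m}{2N}-O(m/N)\le -\nu\,\tfrac{m\log N}{N}+o(1),
\]
which is $o(1)$ only when $m=o(N/\log N)$. There is no reason for $m$ to be this small. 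Concretely, take all $c_i=0$, all $a_i=1$, and $z$ with $|z|<e^{-\nu}$: every block is inside, $q_i=\nu/|\log|z||\cdot\log N$ is a fixed fraction of $d_i=\log N$, so $m$ is a constant fraction of $N$. Then $\nu m\log N/N$ diverges, and your bound $\LP(z)\ge\LPL(z)-o(1)$ fails. (This example is well within the paper's hypotheses: $\ell=N/\log N=o(N)$ and Corollary~\ref{cor:main0.5} applies.) In effect you are double-counting the $-\nu$ per inside block: once in $\det A$ and once (unavoidably) in the noise determinant.

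Two secondary issues. First, the construction you sketch --- partitioning each block so that the good--good subdeterminant equals $(c_i-z)^{q_i}$ \emph{and} the bad--bad part of $M^N-zI$ is zero --- is inconsistent. By Lemma~\ref{lem:block_calc} the good--good determinant is $(c_i-z)^{d_i-k_i-\sum_j(x_j-y_j)}$, and having bad rows disjoint from bad columns and from (bad columns $-1$) forces $x_j-y_j\ge1$ for every $j$, so the power is at most $q_i-1$, not $q_i$; making the power equal to the good dimension forces bad rows $=$ bad columns, which reintroduces nonzero diagonal entries in the bad--bad block. Second, ``symmetry of $G_2$ plus unimodality of the Gaussian density'' is not enough to prove $\Pr[|\det(G_2-C)|\le t]\le\Pr[|\det G_2|\le t]$: the sublevel set $\{|\det(\cdot)|\le t\}$ is symmetric but not convex, so Anderson-type inequalities do not apply directly. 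The paper's Lemma~\ref{lem:sd} instead uses the parallelepiped formula and iteratively compares the conditional norm of an uncentered Gaussian column with that of a centered one, which is the argument that actually closes this step.
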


Given $\delta>0$ small enough, we seek to estimate the absolute value of the determinant of $\model-zI$ from below. As 
reordering 
rows and columns
does not change the absolute value of the determinant, we first apply such operations to bring the matrix to a convenient form.

Recall that
\(
g_i = (-\nu) - a_i \log|z-c_i|.
\)
Order the blocks such that all the blocks which satisfy both
\begin{enumerate}
  \item $g_i \geq -\delta\left(a_i+\frac{N}{\ell\log(N)}\right)$
  \item $|z-c_i| \leq 1$
\end{enumerate}
appear first, and let $Q$ be the index of the last $i$ for which this is so. For each $i\in [Q]$, in increasing order, move the first column of the $i$-th block to the the far right, and its last row to the far bottom.  Then, reverse the order of the indices of each block, which can also be achieved by conjugating by the appropriate permutation matrix.  See Figure~\ref{fig:monalisa} for
an illustration of this procedure.

\begin{figure}[b]
\adjustbox{scale=0.75}{
\begin{tikzpicture}
\tikzset{square matrix/.style={
    matrix of nodes,
    column sep=-\pgflinewidth, row sep=-\pgflinewidth,
    nodes={draw,
      minimum height=#1,
      anchor=center,
      text width=#1,
      align=center,
      inner sep=0pt
    },
  },
  square matrix/.default=0.6cm
}

\tikzstyle{myarrows}=[line width=1mm,draw=black,-triangle 45,postaction={draw, line width=3mm, shorten >=4.5mm, -}]

\matrix(A)[draw,square matrix,inner sep=0mm]
{
$c_1$\hspace{-1pt}\small{-}\hspace{-1pt}$z$ &1  & 0 & 0 & & & & & & & &\\
0 & $c_1$\hspace{-1pt}\small{-}\hspace{-1pt}$z$ & 1 & 0 & & & & & & & &\\
0 & 0 & $c_1$\hspace{-1pt}\small{-}\hspace{-1pt}$z$ & 1 & & & & & & & &\\
0 & 0 & 0 & $c_1$\hspace{-1pt}\small{-}\hspace{-1pt}$z$ & & & & & & & &\\
& & & &$c_2$\hspace{-1pt}\small{-}\hspace{-1pt}$z$ &1   & 0 & 0 & & & &\\
& & & &0 & $c_2$\hspace{-1pt}\small{-}\hspace{-1pt}$z$ & 1 & 0 & & & &\\
& & & &0 & 0 & $c_2$\hspace{-1pt}\small{-}\hspace{-1pt}$z$ & 1 & & & &\\
& & & &0 & 0 & 0 & $c_2$\hspace{-1pt}\small{-}\hspace{-1pt}$z$ & & & &\\
& & & & & & & &$c_3$\hspace{-1pt}\small{-}\hspace{-1pt}$z$ & 1 & 0 & 0\\
& & & & & & & &0 & $c_3$\hspace{-1pt}\small{-}\hspace{-1pt}$z$ & 1 & 0\\
& & & & & & & &0 & 0 & $c_3$\hspace{-1pt}\small{-}\hspace{-1pt}$z$ & 1\\
& & & & & & & &0 & 0 & 0& $c_3$\hspace{-1pt}\small{-}\hspace{-1pt}$z$ \\
} node [below =0.5cm of A] {$M^N$ before modification};
\draw[thick,color=red] (A-1-1.north west) -- (A-1-4.north east) -- (A-4-4.south east) -- (A-4-1.south west) -- (A-1-1.north west);
\draw[thick,color=red] (A-5-5.north west) -- (A-5-8.north east) -- (A-8-8.south east) -- (A-8-5.south west) -- (A-5-5.north west);
\draw[thick,color=blue] (A-9-9.north west) -- (A-9-12.north east) -- (A-12-12.south east) -- (A-12-9.south west) -- (A-9-9.north west);
\draw[color=red,double,implies-](A-1-2.north east) -- +(0,0.3) node [pos=0.66,above] {$M_1$};
\draw[color=red,double,implies-](A-5-6.north east) -- +(0,0.3) node [pos=0.66,above] {$M_2$};
\draw[color=blue,double,implies-](A-9-10.north east) -- +(0,0.3) node [pos=0.66,above] {$M_3$};

\matrix(B)[right = 1.1cm of A,draw,square matrix,inner sep=0mm]
{
1 & $c_1$\hspace{-1pt}\small{-}\hspace{-1pt}$z$ &0  & &  &  & & & & &0\\
0 & 1 &$c_1$\hspace{-1pt}\small{-}\hspace{-1pt}$z$ &  &  &  & & & & &0\\
0 & 0 & 1 & &  & & & & & & $c_1$\hspace{-1pt}\small{-}\hspace{-1pt}$z$\\
& & &1&$c_2$\hspace{-1pt}\small{-}\hspace{-1pt}$z$   & 0 &  & & & & & 0 \\
& & &0 & 1&$c_2$\hspace{-1pt}\small{-}\hspace{-1pt}$z$  &   & & & & & 0\\
& & &0 & 0 & 1 & & & & & & $c_2$\hspace{-1pt}\small{-}\hspace{-1pt}$z$\\
& & & & &  &$c_3$\hspace{-1pt}\small{-}\hspace{-1pt}$z$ & 1 & 0 & 0& & & &\\
& & & & &  &0 & $c_3$\hspace{-1pt}\small{-}\hspace{-1pt}$z$ & 1 & 0& & & & \\
& & & & &  &0 & 0 & $c_3$\hspace{-1pt}\small{-}\hspace{-1pt}$z$ & 1& & & & \\
& & & & &  &0 & 0 & 0& $c_3$\hspace{-1pt}\small{-}\hspace{-1pt}$z$ & & & & \\
$c_1$\hspace{-1pt}\small{-}\hspace{-1pt}$z$ &0&0 & & &  & &  & &  & 0 & 0 \\
& & & $c_2$\hspace{-1pt}\small{-}\hspace{-1pt}$z$ &0&0&  & & &   & 0 &0 \\
} node [below= 0.5cm of B, align=flush center] {$M^N$ after modification};

\draw[thick,color=red] (B-1-1.north west) -- (B-1-3.north east) -- (B-3-3.south east) -- (B-3-1.south west) -- (B-1-1.north west);
\draw[thick,color=red] (B-4-4.north west) -- (B-4-6.north east) -- (B-6-6.south east) -- (B-6-4.south west) -- (B-4-4.north west);
\draw[thick,color=blue] (B-7-7.north west) -- (B-7-10.north east) -- (B-10-10.south east) -- (B-10-7.south west) -- (B-7-7.north west);
\draw[color=red,double,implies-](B-1-2.north) -- +(0,0.3) node [pos=0.66,above] {$M_1$};
\draw[color=red,double,implies-](B-4-5.north) -- +(0,0.3) node [pos=0.66,above] {$M_2$};
\draw[color=blue,double,implies-](B-7-8.north east) -- +(0,0.3) node [pos=0.66,above] {$M_3$};
\draw[thick,color=black,fill=black, opacity=0.2] (B-1-11.north west) -- (B-11-11.north west) -- (B-11-12.north east) -- +(0,6);
\draw[thick,color=black,fill=black, opacity=0.2] (B-11-1.north west) -- (B-11-11.north west) -- (B-12-11.south west) -- +(-6,0);
\draw[thick,color=black] (B-1-11.north west) -- (B-11-11.north west) -- (B-11-12.north east) -- +(0,6);
\draw[thick,color=black] (B-11-1.north west) -- (B-11-11.north west) -- (B-12-11.south west) -- +(-6,0);
\draw [myarrows](A){}+(3.7,0)--(B){};

\end{tikzpicture}}
\caption{An illustration
	of how $M^N-zI$ is modified, with $Q=2.$ The submatrix containing $M_1,M_2,M_3$ is the matrix $A$ in \eqref{eq:LBblock}.}
  \label{fig:monalisa}
\end{figure}

With a slight abuse of notation, throughout this section we keep calling the remaining matrix $\model.$  We label this matrix
\begin{equation}
  \label{eq:LBblock}
\model =
\begin{bmatrix}
A+N^{-\half-\nu}G_{N-Q} & B \\
C & N^{-\half-\nu}G_{Q} \\
\end{bmatrix},
\end{equation}
where $A$ is deterministic, $G_{N-Q}$ and $G_{Q}$ are independent matrices of
noise of size $N-Q\times N-Q$ and $Q\times Q$ respectively. To compute the
determinant of $\model -zI,$ we begin by computing the determinant of
$A+N^{-\half-\nu}G_{N-Q}-zI$. The Schur complement
formula is then used to calculate the whole determinant.

Write $\LBMd$, and $\LBMn$ for the matrices $A-zI$ and
$N^{-\half-\nu}G_{N-Q}$, denoting the $i$-th block of
$\LBMd$  by $\LBMd_i$. As the determinant of $\LBM= \LBMd + \LBMn$ will be bounded using the
second moment method, we begin by calculating $\Exp\det(\LBM)$. Applying \eqref{eq:det_decomposition} to $\LBMd$ and $\LBMn$  implies:
\begin{equation}\label{eq:LBdet_decomposition}
\det(\LBM) = \sum_{\substack{\row,\col \subset [N-Q] \\ |\row|=|\col|}} (-1)^{s(\row,\col)} \det(\LBMd[\COMP{\row};\COMP{\col}]) \det(\LBMn[\row;\col]).
\end{equation}

For any $\row$ or $\col \neq \emptyset$, we have that $\Exp \det(\LBMn[\row;\col])=0.$  Thus, by the linearity of expectation, may deduce that
\begin{align*}
\Exp\det(\LBM)
&= \sum_{\substack{\row,\col \subset [N-Q] \\ |\row|=|\col|}} (-1)^{s(\row,\col)} \det(\LBMd[\COMP{\row};\COMP{\col}]) \Exp\det(\LBMn[\row;\col])  \\
&= \Exp \det(\LBMd) \\
&= \prod_{i\notin [Q]} (z-c_i)^{a_i\log N}.
\end{align*}

Given that $\Exp\det(\LBM)=\Exp \det(\LBMd)$, the following claim would allow us to apply Chebyshev's inequality and prove Proposition~\ref{prop:lb}.
\begin{lemma}
  \label{lem:LB_var}
  Given $\ell=o(N)$, for $z \in \GOODz$, we have that there is a $C=C(\nu) > 0$ so that for all $\delta > 0$ there is an $N_0 = N_0(\nu,\delta)$ so that
  \[
    \Var \det(\LBM)/|\det(\LBMd)|^2 < C \cdot \delta
  \]
  for $N \geq N_0.$
\end{lemma}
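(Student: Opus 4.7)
The plan is to estimate $\Var\det\LBM$ directly via a Cauchy--Binet-type expansion and exploit the Gaussianity of $\LBMn$. The crucial orthogonality is that, for a matrix $G$ of i.i.d.\ standard real Gaussians, $\Exp[\det G[\row;\col]\,\det G[\row';\col']]$ vanishes unless $(\row,\col)=(\row',\col')$, in which case it equals $|\row|!$. Applied to \eqref{eq:LBdet_decomposition}, this yields
\[
\Exp|\det\LBM|^2 = \sum_{k\ge0}\sum_{|\row|=|\col|=k}|\det\LBMd[\COMP{\row};\COMP{\col}]|^2\cdot k!\,N^{-(1+2\nu)k},
\]
and the $k=0$ term equals $|\det\LBMd|^2=|\Exp\det\LBM|^2$, so the remaining $k\ge 1$ terms sum to $\Var\det\LBM$.

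Next I would factor this bound over blocks. Lemma~\ref{lem:unbalanced_minors} forces $|\row_i|=|\col_i|=:k_i$ for every block (otherwise the minor vanishes), in which case the minor of $\LBMd$ factors as $\prod_i\det\LBMd_i[\COMP{\row_i};\COMP{\col_i}]$. Bounding $k!\,N^{-(1+2\nu)k}\le N^{-2\nu k}=\prod_iN^{-2\nu k_i}$ (valid since $k\le N$) and dividing by $|\det\LBMd|^2=\prod_i|\det\LBMd_i|^2$ converts the sum into a product over blocks:
\[
\frac{\Var\det\LBM}{|\det\LBMd|^2}\le\prod_{i=1}^\ell(1+T_i)-1,\qquad T_i:=\sum_{k\ge1}N^{-2\nu k}\Sigma_i(k),
\]
with $\Sigma_i(k):=\sum_{|\row_i|=|\col_i|=k}|\det\LBMd_i[\COMP{\row_i};\COMP{\col_i}]/\det\LBMd_i|^2$. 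Applying Lemma~\ref{lem:block_calc} to each block gives the summands explicitly: for $i\in[Q]$ (block $I+\alpha_iT$, $|\alpha_i|\le1$) one gets $|\alpha_i|^{2d_i}$ with $d_i=\sum_r(\srow_r-\scol_r)\ge0$; for $i\notin[Q]$ (block $\alpha_iI+T$), factoring out $\alpha_i$ yields $|\alpha_i|^{-2k-2d_i}$. Parametrizing valid configurations by their gap sequences gives $\Sigma_i(k)=[y^{n_i-k}]\,1/((1-y)^{k+1}(1-\tilde q_iy)^k)$ with $\tilde q_i\in\{|\alpha_i|^2,|\alpha_i|^{-2}\}$, from which one extracts both the geometric bound $\Sigma_i(k)\le\binom{n_i}{k}\bigl|1-|\alpha_i|^2\bigr|^{-k}$ and the Vandermonde bound $\Sigma_i(k)\le\binom{n_i+k}{2k}$.

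The main obstacle is the final step: proving $\sum_iT_i\le C\delta$, after which $\prod_i(1+T_i)-1\le2\sum_iT_i$. For blocks $i\notin[Q]$ with $g_i<-\delta(a_i+N/(\ell\log N))$, the consequent lower bound $|\alpha_i|^{a_i}\ge e^{-\nu+\delta(a_i+N/(\ell\log N))}$ supplies a $\delta$-quantified decay factor $|\alpha_i|^{-2k}$; the alternative $i\notin[Q]$ case with $|\alpha_i|>1$ is similar. For $i\in[Q]$, where $|\alpha_i|\le1$ and $g_i\ge-\delta(a_i+N/(\ell\log N))$, the hypothesis $z\in\GOODz$ --- namely $\min(n_i,|1-|\alpha_i|^2|^{-1})<N^{2\nu-\epsilon'}$ --- lets one choose, per block, between the geometric and Vandermonde bounds according to whichever factor in the $\min$ is small, while the slight slack $\delta(a_i+N/(\ell\log N))$ allowed by the definition of $[Q]$ produces the $\delta$-prefactor. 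Summing over blocks with $\sum_in_i=N$ and using $\ell=o(N)$ closes the estimate. The hypothesis $z\in\GOODz$ is exactly what prevents the pathological regime, described in the Introduction, in which many blocks yield circles of radius close to $1$ --- a regime in which this second-moment bound would blow up.
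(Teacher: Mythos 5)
Your setup is correct and matches the paper's start: the Gaussian orthogonality $\Exp\bigl[\det G[\row;\col]\det G[\row';\col']\bigr]=|\row|!\,\one[(\row,\col)=(\row',\col')]$ applied to \eqref{eq:LBdet_decomposition} gives exactly the paper's identity
\[
\Var\det\LBM=\sum_{\substack{\row,\col\subset[N-Q]\\|\row|=|\col|=k>0}}|\det\LBMd[\COMP{\row};\COMP{\col}]|^2\,k!\,N^{-(1+2\nu)k}.
\]
The divergence happens at the very next step, where you discard the factor $k!\,N^{-k}\le1$ and then sum over rows and columns independently to write $\Var\det\LBM/|\det\LBMd|^2\le\prod_i(1+T_i)-1$. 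That factor is not slack: in the paper it is precisely what cancels the $\binom{N-Q}{k}$ many choices of $\row$ (via $N^{-k}k!\binom{N-Q}{k}\le1$), which is why Lemma~\ref{lem:minor_sum} can afford to sum only over $\col_i$ at fixed $\row_i$ and then take a maximum over $\row$. Your $\Sigma_i(k)$ sums over $\row_i$ as well, so the block-$i$ factor picks up an extra $\binom{n_i}{k}$ that nothing compensates.

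This is fatal in the regime $\gamma\in(\half,1]$ that the lemma is designed to cover. Take a single block ($\ell=1$) of dimension $n_1=N-Q\approx N$ with $(1-|z-c_1|^2)^{-1}=\tfrac12 N^{2\nu-\BADeps}$; this point lies in $\GOODz$ and the block lies in $[Q]$. Then
\[
T_1=\sum_{k\ge1}N^{-2\nu k}\Sigma_1(k)\ge\sum_{k\ge1}N^{-2\nu k}\binom{n_1}{k}\quad\text{(taking only $\col_i=\row_i$)}\ge n_1N^{-2\nu}\to\infty
\]
for $\nu\le\half$, and with your geometric bound $\Sigma_1(k)\le\binom{n_1}{k}(1-|\alpha_1|^2)^{-k}$ the estimate only gives $T_1\le(1+\tfrac12 N^{-\BADeps})^{n_1}-1\approx e^{\tfrac12 N^{1-\BADeps}}$, which is enormous since $\BADeps<2\gamma-1\le1$. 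Meanwhile the true ratio is bounded because the retained $N^{-k}k!$ would knock out the $\binom{n_1}{k}$: $\sum_k N^{-k}k!\binom{n_1}{k}N^{-2\nu k}(1-|\alpha_1|^2)^{-k}\le\sum_k (N^{-\BADeps}/2)^k<\infty$. So your decoupled product bound $\prod_i(1+T_i)-1$ genuinely overshoots and cannot be salvaged by any choice between the geometric and Vandermonde estimates on $\Sigma_i(k)$; the combinatorial factor from enumerating the row sets must be cancelled globally, as the paper does, not block by block. Concretely, the fix is to keep $N^{-k}k!$, pass to $\max_\row$ of the product of column-only sums (the paper's \eqref{eq:var1}), and use Lemma~\ref{lem:minor_sum} as stated — a per-row, column-sum bound — rather than a per-block two-sided sum.
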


In the proof of Lemma~\ref{lem:LB_var} we shall make use of the following bound whose proof we give after the proof of Lemma~\ref{lem:LB_var}.
\begin{lemma}
\label{lem:minor_sum}
For $z \in \GOODz$ and $1 > \delta > 0,$
there is an $N_0$
depending on
$\nu$, $z$ and $\delta$, so that for any $i \in [\ell]$ and any
$\row_i \subseteq [\dim \LBMd_i],$
\[
  \sum_{|\col_i|=|\row_i|} |\det(\LBMd_i[\COMP{\row_i};\COMP{\col_i}])|^2N^{-2\nu|\row_i|}<|\det(\LBMd_i)|^2\delta^{|\row_i|}
\]
for all $N \geq N_0.$
\end{lemma}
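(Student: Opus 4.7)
The plan is to exploit the bi-diagonal structure of each block $\LBMd_i$ together with the closed-form minor formula of Lemma~\ref{lem:block_calc}. Depending on whether $i \in [Q]$, the block $\LBMd_i$ is either the modified $I + (c_i-z)T$ of dimension $d_i = a_i\log N - 1$ (with $|\det \LBMd_i|^2 = 1$), or the unmodified Jordan block $(c_i-z)I + T = (c_i-z)\bigl(I + (c_i-z)^{-1}T\bigr)$ (with $|\det \LBMd_i|^2 = |c_i-z|^{2a_i\log N}$). Either way, pulling out the scalar and invoking Lemma~\ref{lem:block_calc} with $c_i - z$ (resp.\ $(c_i-z)^{-1}$) in the role of $z$ yields an explicit formula for $\det\LBMd_i[\COMP{\row_i};\COMP{\col_i}]$ as a product of powers of $c_i - z$ subject to the nesting constraint $\scol_j \leq \srow_j < \scol_{j+1}$.

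With the minors written out, for fixed $\row_i = \{\srow_1 < \cdots < \srow_k\}$ (and $\srow_0 := 0$) the sum $\sum_{\col_i} |\det \LBMd_i[\COMP{\row_i};\COMP{\col_i}]|^2$ factorizes into a product of one-dimensional geometric sums
\[
\prod_{j=1}^{k}\ \sum_{s=0}^{\srow_j - \srow_{j-1} - 1}\alpha^{\pm s},\qquad \alpha := |c_i - z|^2,
\]
where the sign depends on the case. Each such factor admits two natural upper bounds: the closed-form geometric bound of order $|1-\alpha|^{-1}$, and the trivial counting bound $\srow_j - \srow_{j-1} \leq d_i$. Taking the minimum factor-wise and using $\prod_j \min(n_j, b) \leq (\min(d_i, b))^k$, the sum is bounded (up to a prefactor handled case-by-case) by $\bigl(\min(d_i,\,|1-|z-c_i|^2|^{-1})\bigr)^k$. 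The hypothesis $z \in \GOODz$ is precisely the statement $\min(a_i\log N,\,|1-|z-c_i|^2|^{-1}) < N^{2\nu - \BADeps}$; after multiplying by $N^{-2\nu k}$ and dividing by $|\det \LBMd_i|^2$, the resulting bound is of order $N^{-\BADeps k}$, which is $<\delta^k$ for all $N \geq N_0(\delta)$. This settles $i \in [Q]$ and $i\notin[Q]$ with $|z-c_i|>1$.

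The main technical obstacle is the remaining subcase $i \notin [Q]$ with $|z-c_i|\leq 1$. Here dividing by $|\det \LBMd_i|^2 = |c_i-z|^{2a_i\log N}$ introduces a nuisance prefactor that can be as large as $\alpha^{-d_i} = N^{2(\nu+g_i)}$, and $\GOODz$ by itself cannot compensate for it. This is precisely where the definition of $[Q]$ pays off: in this subcase, the conjunction $i\notin[Q]$ and $|z-c_i|\leq 1$ forces $g_i < -\delta(a_i + N/(\ell\log N))$, whence $\alpha^{-d_i} \leq N^{2\nu - 2\delta(a_i + N/(\ell\log N))}$. The surplus $-2\delta(a_i + N/(\ell\log N))$ in the exponent is designed to absorb the $2\nu$ lost from $|\det \LBMd_i|^{-2}$; combined with the $\GOODz$ bound on $|1-\alpha|^{-1}$ (or on $d_i$), it produces an exponent of $N$ strictly negative for all $k \geq 1$ and all $N$ sufficiently large in terms of $\nu$, $z$, and $\delta$. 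Carefully balancing these three estimates against each other is the principal delicate step of the proof.
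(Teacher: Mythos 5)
Your treatment of the two outer cases (blocks with $i \in [Q]$, and blocks with $|z-c_i|>1$) matches the paper: use Lemma~\ref{lem:block_calc} to factor the sum over $\col_i$ into one-dimensional (truncated geometric) sums, bound each factor by $\min(a_i\log N,|1-|z-c_i|^2|^{-1})$, invoke $z\in\GOODz$, and pick up $N^{-\BADeps k}<\delta^k$. That part is fine.

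The remaining case ($i\notin[Q]$ with $|z-c_i|\le 1$, i.e.\ $S_2$ in the paper's notation) is where your proposal has a genuine gap. You propose to push the bound through by showing the $N$-exponent $-\BADeps k + 2(\nu+g_i)$ is negative, using $g_i < -\delta\bigl(a_i + \tfrac{N}{\ell\log N}\bigr)$ to make $2(\nu+g_i)<2\nu - 2\delta\bigl(a_i + \tfrac{N}{\ell\log N}\bigr)$ and hoping this beats $2\nu-\BADeps k$. But under the standing hypothesis $\ell=o(N)$ one only has $N/\ell\to\infty$, \emph{not} $N/(\ell\log N)\to\infty$ (take $\ell\sim N/\sqrt{\log N}$). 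Moreover a block in this class can have $a_i\log N=1$, so $a_i\sim 1/\log N\to 0$. Then $\delta\bigl(a_i + \tfrac{N}{\ell\log N}\bigr)\to 0$, while $\nu-\BADeps/2>0$ is a fixed positive constant (recall $\BADeps<2\nu$); for $k=1$ the exponent stays strictly positive and your bound diverges, even though the conclusion of the lemma is true. Equivalently: dividing the crude bound $T_i\le (a_i\log N)^k$ by $|\det\LBMd_i|^2$ \emph{once} cannot be compensated by $\GOODz$ and the $g_i$ inequality alone, because only a single power of $|\det\LBMd_i|^{-2}$ is available while the $N^{-2\nu k}$ factor scales with $k$.

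The paper's argument for this block class is structurally different and does not invoke $\GOODz$ at all. It establishes the scalar inequality
$a_i\log N \le \delta\, N^{2\nu}\,|\det\LBMd_i|^2$
by a small optimization (bound $x\le \delta\xi_\delta e^{2\delta x}$ with $\delta\xi_\delta=\sup_{x\ge 0}xe^{-2\delta x}$, then absorb $\xi_\delta$ into $e^{2\delta N/\ell}$ using $\ell=o(N)$, and finally use $-g_i>\delta(a_i+N/(\ell\log N))$). Raising this to the $k$-th power gives
$(a_i\log N)^k N^{-2\nu k}\le \delta^k|\det\LBMd_i|^{2k}$,
and the decisive step is $|\det\LBMd_i|^{2k}\le|\det\LBMd_i|^2$ (valid since $|z-c_i|\le 1$). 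The $k$-th power of $|\det\LBMd_i|^2$ supplied by this route is exactly what your exponent count is missing, and is what makes the estimate uniform in the block index.
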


\begin{proof}[Proof of Lemma~\ref{lem:LB_var}]
For any $X_i,Y_i \subseteq [N-Q], i=1,2$ define
\begin{multline*}
  K(X_0,Y_0,X_1,Y_1)  \\
=\Cov(
\det(\LBMd[\COMP{\row_0};\COMP{\col_0}]) \det(\LBMn[\row_0;\col_0])
,\det(\LBMd[\COMP{\row_1};\COMP{\col_1}]) \det(\LBMn[\row_1;\col_1])).
\end{multline*}
Using \eqref{eq:LBdet_decomposition}, we reformulate the above as
\begin{align}\label{eq:cov_decomposition}
\Var \det(\LBM) =
\sum_{\substack{\row_i,\col_i \subset [N-Q] \\ |\row_i|=|\col_i| \\ i=1,2}}
(-1)^{s(\row_0,\col_0)+s(\row_1,\col_1)}
K(X_0,Y_0,X_1,Y_1).
\end{align}
Observe that if either $\row_0 \neq \row_1$ or $\col_0 \neq \col_1,$ then
\(
  K(X_0,Y_0,X_1,Y_1) =0.
\)
Using this \eqref{eq:cov_decomposition} reduces to
\begin{align*}
\Var \det(\LBM)
&=\sum_{\substack{\row,\col \subset [N-Q] \\ |\row|=|\col|}}
|\det(\LBMd[\COMP{\row};\COMP{\col}])|^2\Var(\det(\LBMn[\row;\col])) \\
&=\sum_{\substack{\row,\col \subset [N-Q] \\ |\row|=|\col|>0}}
|\det(\LBMd[\COMP{\row};\COMP{\col}])|^2N^{-2(\nu+\half)|\row|}(|\row|!)
\end{align*}

Recall that $\row_i$ are the rows of $\row$ which intersect the block $\LBMd_i$ and $\col_i$ are the columns of $\row$ which intersect the block $\LBMd_i.$ This allows further development of \eqref{eq:cov_decomposition} into

\begin{align}
\nonumber
\Var \det(\LBM)
&=\sum_{\substack{\row \subset [N-Q] \\ |\row|>0}}N^{-|\row|}(|\row|!)
\prod_{i=1}^\ell \sum_{|\col_i|=|\row_i|} |\det(\LBMd_i[\COMP{\row_i};\COMP{\col_i}])|^2N^{-2\nu|\row_i|}\\
\nonumber
&=\sum_{k=1}^{N-Q}N^{-k}k!\sum_{\substack{\row \subset [N-Q] \\ |\row|=k}}
\prod_{i=1}^\ell \sum_{|\col_i|=|\row_i|} |\det(\LBMd_i[\COMP{\row_i};\COMP{\col_i}])|^2N^{-2\nu|\row_i|}\\
\label{eq:var1}
&\le\sum_{k=1}^{N-Q}\max_{\substack{\row \subset [N-Q] \\ |\row|=k}}
\prod_{i=1}^\ell \sum_{|\col_i|=|\row_i|} |\det(\LBMd_i[\COMP{\row_i};\COMP{\col_i}])|^2N^{-2\nu|\row_i|}.
\end{align}

Combining \eqref{eq:var1} with Lemma \ref{lem:minor_sum}, we have
\begin{align}
\nonumber
\Var \det(\LBM)
&\le\sum_{k=1}^{N-Q}\max_{\substack{\row \subset [N-Q] \\ |\row|=k}}
\prod_{i=1}^\ell \sum_{|\col_i|=|\row_i|} |\det(\LBMd_i[\COMP{\row_i};\COMP{\col_i}])|^2N^{-2\nu|\row_i|}\\
\nonumber
&\le\sum_{k=1}^{N-Q}\max_{\substack{\row \subset [N-Q] \\ |\row|=k}}
\prod_{i=1}^\ell |\det(\LBMd_i)|^2{\delta}^{|\row_i|} \\
\nonumber
&\le
|\det(\LBMd)|^2
\sum_{k=1}^{N-Q} \delta^{k}\le |\det(\LBMd)|^2 \frac{\delta}{1-\delta}.
\end{align}
\end{proof}

We now turn to proving Lemma~\ref{lem:minor_sum}.
\begin{proof}[Proof of Lemma~\ref{lem:minor_sum}]
  We divide the proof into three cases, according to the type of block
  $\LBMd_i.$  Define a partition of $[\ell] = S_1 \cup S_2 \cup S_3$ by
\begin{align*}
  S_1 &\Def \left\{ i :  g_i \geq -\delta\left(a_i+\frac{N}{\ell\log N}\right), |z-c_i| \leq 1 \right\}, \\
  S_2 &\Def \left\{ i :  g_i < -\delta\left(a_i+\frac{N}{\ell\log N}\right), |z-c_i| \leq 1 \right\}, \\
  S_3 &\Def \left\{ i :  |z-c_i| > 1 \right\}.
\end{align*}
Note that $S_1 = [Q].$

Set $k=|\row_i|.$  Let $\row_i = \left\{ x_1, x_2, \ldots, x_k \right\},$ with $x_1 < x_2 < \cdots < x_k.$
By virtue of Lemma~\ref{lem:block_calc}, the only choices of $\col_i = \left\{ y_1 , y_2, \ldots y_k \right\}$ with $y_1 < y_2 < \cdots < y_k$ which we need to consider are those which satisfy
\[
  x_1 \leq y_1 < x_2 \leq y_2 < \cdots < x_{k} \leq y_k.
\]

\noindent \emph{ The case $i\in S_1$:}
For $i \in S_1,$ we have that $\LBMd_i$ has $1$ on the diagonal and $c_i$ on the superdiagonal.  Thus, we may write
\[
T_i \Def \sum_{|\col_i|=|\row_i|} |\det(\LBMd_i[\COMP{\row_i};\COMP{\col_i}])|^2
=
\prod_{j=1}^k \sum_{r=0}^{x_{j+1} - x_j-1} |z-c_i|^{2r},
\]
where we take $x_{k+1} = \dim \LBMd_i \leq a_i \log N.$
%
%
This we may control either by bounding each element by $1$ or by bounding the truncated geometric series by the entire geometric series, which implies
\[
  T_i^{1/k} \leq \min \left\{ a_i \log N, \frac{1}{1-|z-c_i|^2} \right\} \leq N^{2\nu - \BADeps},
\]
where the rightmost inequality follows from the fact that $z \in \GOODz.$

\noindent \emph{ The case $i\in S_3$:}
This is nearly identical to the case $i \in S_1,$ and so we show it first.  For $i \in S_3,$ it is now the case that $\LBMd_i$ has $c_i-z$ on the diagonal and $1$ on the superdiagonal.  Pulling out a factor of $c_i-z,$ we essentially reduce the determinant to the previous case, i.e.
\begin{align}
  \nonumber
  T_i &= |z-c_i|^{2|\COMP{\row_i}|} \sum_{|\col_i|=|\row_i|} |\det((z-c_i)^{-1}\LBMd_i[\COMP{\row_i};\COMP{\col_i}])|^2 \\
  \nonumber
  &= |z-c_i|^{2|\COMP{\row_i}|}\prod_{j=1}^k \sum_{r=0}^{x_{j+1} - x_j-1} |z-c_i|^{-2r} \\
  \nonumber
  &= |z-c_i|^{2(a_i\log N - k)}\prod_{j=1}^k \sum_{r=0}^{x_{j+1} - x_j-1} |z-c_i|^{-2r} \\
  \label{eq:var2}
  &\leq |z-c_i|^{2(a_i\log N - k)}\biggl[\sum_{r=0}^{a_i \log N-1} |z-c_i|^{-2r}\biggr]^k.
\end{align}
Thus, on the one hand, we may bound the sum by bounding each term by $1,$ or we may bound by a geometric series.  In the first case we get
\[
  T_i\hspace{-2pt}
  \leq \hspace{-2pt}|z-c_i|^{2(a_i\log N - k)} |a_i \log N|^{k}\hspace{-2pt}
  \leq  \hspace{-2pt}|z-c_i|^{2a_i\log N} |a_i \log N|^{k}\hspace{-2pt}
  = |\det(\LBMd_i)|^2 |a_i \log N|^{k}.
\]
In the second case we get
\[
  T_i \leq |z-c_i|^{2(a_i\log N - k)}\biggl[\frac{1}{1-|z-c_i|^{-2}}\biggr]^k = |\det(\LBMd_i)|^2\biggl[\frac{1}{|z-c_i|^2-1}\biggr]^k.
\]
Thus as $z \in \GOODz$, it follows that
\[
  T_i \leq|\det(\LBMd_i)|^2 N^{(2\nu - \BADeps)k}.
\]

\noindent \emph{ The case $i\in S_2$:}
As in the previous case, for $i \in S_2$, the blocks $\LBMd_i$ have $c_i-z$ on the diagonal and $1$ on the superdiagonal. From Lemma~\ref{lem:bidiag_det}, we have that
\(
|\det(\LBMd_i[\COMP{\row_i};\COMP{\col_i}])|^2 \leq 1,
\)
and hence
\begin{equation}\label{eq:s2_0}
  T_i \leq (a_i \log N)^k.
\end{equation}

Note that the condition that $g_i < -\delta a_i$ and $|z-c_i| \leq 1$ imposes a lower bound on the size of $a_i,$ namely
\[
-\delta a_i > g_i = -\nu - a_i \log|z-c_i| > - \nu
\]
and hence $a_i < \nicefrac{\nu}{\delta}.$

Let $\xi_\delta < \infty $ be the solution of $\delta \xi_\delta = \sup_{x \geq 0} xe^{-2\delta x}.$  We get that
\[
  a_i \log N \leq \delta \xi_\delta \cdot e^{2\delta a_i \log N}
\]
As $\ell = o(N),$ there is an $N_1 = N_1(\delta)$ sufficiently large so that $\xi_\delta \leq e^{2\delta \nicefrac{N}{\ell}}$ for all $N \geq N_1.$ Hence, for all $N \geq \max(N_0,N_1),$
\begin{equation}\label{eq:s2_2}
  a_i \log N
  \leq \delta e^{2\delta a_i \log N + 2\delta \nicefrac{N}{\ell}}
  \leq \delta e^{-2g_i \log N} = \delta N^{2\nu} |z-c_i|^{2 a_i \log N}= \delta N^{2\nu} | \det \LBMd_i |^2.
\end{equation}
Combining \eqref{eq:s2_0} and \eqref{eq:s2_2}, it follows that
\[
  T_i \leq (a_i \log N)^k \leq \delta^k N^{2\nu k} | \det \LBMd_i |^{2k}\leq \delta^k N^{2\nu k} | \det \LBMd_i |^{2}
\]
for all $N$ sufficiently large.

\end{proof}

Having controlled the determinant of $A+N^{-\half-\nu}G_{N-Q},$ it remains
to show that the determinant of $\model-zI$ is not too small.  Our proof rests
on the following stochastic domination lemma.
\begin{lemma}
  \label{lem:sd}
  Suppose that $E$ is a $Q \times Q$ standard Gaussian matrix.  Then for any $Q \times Q$ matrix $M$ independent of $E$ and all $t \geq 0,$
  \[
    \Pr[ |\det(E + M)| \leq t] \leq \Pr[ |\det E| \leq t].
  \]
\end{lemma}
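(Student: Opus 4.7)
The plan is to induct on the dimension $Q$, exploiting the rotational invariance of the Gaussian distribution; by conditioning on $M$ we may treat $M$ as deterministic. For the base case $Q=1$, the claim reduces to $\Pr[|Z+\mu|\leq t]\leq \Pr[|Z|\leq t]$ for $Z\sim N(0,1)$ and $\mu\in\R$, which is immediate from the symmetry and unimodality of the Gaussian density. The same principle, applied via Anderson's inequality to the centered Euclidean ball in $\R^Q$, gives the auxiliary fact that for a standard Gaussian vector $v\in\R^Q$ and any fixed $w\in\R^Q$, $\|v+w\|$ stochastically dominates $\|v\|$.

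For the inductive step, I would peel off the first column. Let $e_1,m_1$ denote the first columns of $E,M$ and choose an orthogonal matrix $U=U(e_1)$, measurable in $e_1$ (for instance by a Householder reflection), such that $U(e_1+m_1)=\|e_1+m_1\|\vec e_1$. Since left-multiplication by $U$ preserves $|\det|$, writing the transformed matrix in block form gives
\[
|\det(E+M)| = \|e_1+m_1\|\cdot |\det A|,
\]
where $A$ is the lower-right $(Q-1)\times(Q-1)$ block of $U(E+M)$. By rotational invariance of $N(0,I_Q)$, $A$ decomposes as $\tilde E+\tilde M$, where $\tilde E$ (built from the last $Q-1$ entries of $Ue_k$ for $k=2,\dots,Q$) is a $(Q-1)\times(Q-1)$ matrix of iid standard Gaussians \emph{independent of} $e_1$, and $\tilde M$ is a deterministic function of $e_1$ and $M$.

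Applying the inductive hypothesis conditional on $e_1$ yields $\Pr[|\det A|\leq s\mid e_1]\leq \Pr[|\det\tilde E|\leq s]$, and integrating over $e_1$ gives $\Pr[|\det(E+M)|\leq t]\leq \Pr[\|e_1+m_1\|\cdot|\det\tilde E|\leq t]$. Running the same peeling procedure on $E$ alone shows $|\det E|\stackrel{d}{=}\|e_1\|\cdot|\det\tilde E|$, so the induction closes once one verifies that $\|e_1+m_1\|\cdot|\det\tilde E|$ stochastically dominates $\|e_1\|\cdot|\det\tilde E|$; this follows by conditioning on the nonnegative, $e_1$-independent quantity $|\det\tilde E|$ and invoking $\|e_1+m_1\|\succeq\|e_1\|$ from the base-case discussion. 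The only delicate points --- the measurable construction of $U$ and the independence of $\tilde E$ from $e_1$ --- are routine consequences of the orthogonal invariance of the standard Gaussian on $\R^Q$.
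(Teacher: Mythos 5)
Your proof is correct and is essentially the paper's argument in inductive dress: both peel off one column at a time and reduce to the fact that the norm of an uncentered standard Gaussian vector stochastically dominates the norm of a centered one. The paper packages the peeling directly through the parallelepiped (Gram) determinant identity $|\det(E+M)|^2=\prod_j\|\proj_{S_{j-1}^\perp}((E+M)_j)\|^2$ and conditions sequentially, whereas you realize the same decomposition via a Householder rotation and an explicit induction on $Q$, but the substance is identical.
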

\begin{proof}
  The proof rests on the parallelpiped formula for the modulus of a determinant.  For any square matrix $M',$ let $S_j(M')$ be the span of the first $j$ columns.  Then we have the following identity
  \[
    |\det(E+M)|^2 = \prod_{j=1}^Q \left|\proj_{(S_{j-1}(E+M))^{\perp}}\left(  (E+M)_j \right)\right|^2,
  \]
  where $(E+M)_j$ denotes the $j$-th column of $E+M.$
  By virtue of absolute continuity with lebesuge measure, we have that each $S_j(E+M)$ is almost surely $j$-dimensional.
  Conditioned on columns $1,2,\ldots,j$ and choosing an orthonormal basis for $S_{j-1}(E+M)^{\perp},$ the projection
  $\proj_{(S_{j-1}(E+M))^{\perp}}\left(  (E+M)_j \right)$ has the law of an uncentered $(Q-j+1)$-dimensional standard Gaussian vector.  The norm of an uncentered standard Gaussian vector stochastically dominates the norm of a centered standard Gaussian vector, and hence we have the relation
  \begin{multline*}
    \Pr[
      \left|\proj_{(S_{j-1}(E+M))^{\perp}}\left(  (E+M)_j \right)\right|^2 \leq t ~\bigl\vert~ \sigma(S_{j-1}(E+M))
    ] \\
    \leq
    \Pr[
      \left|\proj_{(S_{j-1}(E))^{\perp}}\left(  (E)_j \right)\right|^2 \leq t ~\bigl\vert~ \sigma(S_{j-1}(E))
    ].
  \end{multline*}
  Hence, applying this relation iteratively, we get the desired result that
  \[
    \Pr[ |\det(E + M)|^2 \leq t] \leq \Pr[ |\det E|^2 \leq t].
  \]

\end{proof}

With this result in hand, we finally prove to lower bound proposition using the Schur complement formula and Chebyshev's inequality.
\begin{proof}[Proof of Proposition~\ref{prop:lb}]
  Recall that
  \[
    \LP(z) = \frac{1}{N} \log | \det( \model -zI) |.
  \]
  We will apply the Schur complement formula to compute this determinant, using the block structure from \eqref{eq:LBblock}.  For $\delta>0$ sufficiently small, Lemma~\ref{lem:LB_var} and Chebyshev's inequality imply that
  \[
    |\det(A+N^{-\nu-\half}G_{N-Q} - zI)| \geq \half |\det(\LBMd)|
  \]
  with probability going to $1.$  As a corollary, the corner submatrix $A+N^{-\nu-\half}G_{N-Q} - zI$ is invertible with high probability.
  Hence, we may apply the Schur complement formula to write
  \[
    \det( \model -zI) = \det(A+N^{-\nu-\half}G_{N-Q} - zI) \det(N^{-\nu-\half}G_{Q} - Z),
  \]
  with $Z$ some $Q \times Q$ matrix independent of $G_{Q}.$  Applying Lemma~\ref{lem:sd} and Lemma~\ref{lem:Gdet_lb}, we have that
  \[
    |\det(N^{-\nu-\half}G_{Q} - Z)| \geq N^{(-\nu-\half)Q}(Q!)^{1/2}e^{-cQ}
  \]
  with high probabilty.

By our assumption that $\ell=o(N)$, we get
  \begin{align*}
    \LP(z) &\geq \frac{1}{N} \log |\det(\LBMd)| -(\nu+\half) Q\frac{\log N}{N} + \frac{1}{2}\frac{Q(\log Q)}{N} - o(1)\nonumber\\
           &=    \frac{1}{N} \log |\det(\LBMd)| -\nu Q\frac{\log N}{N} + \frac{1}{2} \frac{Q(\log Q-\log N)}{N} - o(1)\\
           &=    \frac{1}{N} \log |\det(\LBMd)| -\nu Q\frac{\log N}{N} - o(1)\\
  \end{align*}

  As $\det(\LBMd)$ is given by the product of its diagonal, we get
  \begin{align}
    \label{eq:var3}
    \LP(z)
    &\geq \frac{\log N}{N}\sum_{i > Q} a_i \log |z-c_i| - \frac{\log N}{N}\sum_{i \in [Q]}\nu  - o(1).
  \end{align}

  Recall that we wish to bound $\LP(z)$ below by $\LPL(z),$ which is given by \eqref{eq:target}:
  \[
    \LPL(z) = \frac{\log N}{N}\sum_{i : g_i < 0} a_i \log |z-c_i| - \frac{\log N}{N}\sum_{i : g_i \geq 0} \nu.
  \]

  Observing that $i\in[Q]$ implies $g_i < 0$ we get

  \begin{align*}
    \LP(z) - \LPL(z) &= \frac{\log N}{N}\sum_{i \notin [Q], g_i<0} a_i \log |z-c_i| - \frac{\log N}{N}\sum_{i \notin [Q], g_i<0} \nu -  o(1)\nonumber \\
                     &= \frac{\log N}{N}\sum_{i \notin [Q], g_i<0} g_i+o(1)\nonumber \\
                     &< \frac{\log N}{N}\sum_{i \notin [Q], g_i<0} \delta \frac{N}{\ell\log N}+o(1)\nonumber \\
                     &< \delta + o(1), \nonumber
  \end{align*}
  with the $o(1)$ errors going to $0$ at some absolute rate.
  Thus, we have that for all fixed $\delta,$
  \(
    \LP(z) \geq \LPL(z) - \delta
  \)
  with probability going to $1$ as $N \to \infty.$

\end{proof}

\section{Proof of Theorem \ref{thm:main1}}
\label{sec:final}

From Propositions~\ref{prop:ub} and \ref{prop:lb}, and the assumption that $\mu_N \weakto \mu,$ we have that for almost every $z\in \C,$ $\LP(z) \Pto \LP[\mu](z).$  It follows from Theorem 2.8.3 of \cite{TaoBook} that $\ESD$ converges to $\mu$ vaguely in probability, i.e. for all compactly supported continuous functions $\phi : \C \to \R,$
\[
	\int_\C \phi(x) d\ESD(x) \Pto \int_\C \phi(x) d\mu(x).
\]

To additionally conclude 
that 
the convergence holds in the weak topology, it suffices to show that 
there is a compact 
$K_1 \subseteq \C$ so that $\ESD(K_1^c) \Pto 0.$  Note that as we assume there is a compact $K \subseteq \C$ so that $\mu_N$ is supported on $K,$ it follows there is an $C$ sufficiently large so that all $c_i=c_i(N)$ where $1\leq i \leq \ell=\ell(N)$ and $N$ runs over $\N$ are bounded in modulus by 
 $C.$  Hence,
\begin{align*}
	\|\model\|_{\operatorname{op}}
	&\leq \|M^N + N^{-\nu-\half}G^N\|_{\operatorname{op}} \\
	&\leq \|M^N\|_{\operatorname{op}}+\|N^{-\nu-\half}G^N\|_{\operatorname{op}}
	\leq 1 + C +\|N^{-\nu-\half}G^N\|_{\operatorname{op}}.
      \end{align*}
It is easily checked that $\|G^N\|_{\operatorname{op}} \leq C' \sqrt{N}$
with high probability, for some sufficiently large $C'$.
(In fact, taking $C=1+\delta$ with arbitrary $\delta>0$ suffices.)
Since $\nu>0$,  the existence of the claimed $K_1$ follows and completes the
proof of the theorem.
\section*{Acknowledgements.}
We thank Naomi Feldheim, Mark Rudelson and Misha Sodin for useful suggestions.
\bibliographystyle{hep}

\end{document}